\newtheorem{proposition}{Proposition}
\newtheorem{lemma}{Lemma}
\newtheorem{assumption}{Assumption}
\newtheorem{rmk}{Remark}
\numberwithin{equation}{section}
\newcommand{\E}{\mathbb{E}}
\newcommand{\Xcal}{\mathcal{X}}
\newcommand{\Ycal}{\mathcal{Y}}
\newcommand{\bftheta}{\boldsymbol{\theta}}
\newcommand{\bfTheta}{\boldsymbol{\Theta}}
\definecolor{amaranth}{rgb}{0.9, 0.17, 0.31}
\definecolor{cobalt}{rgb}{0.0, 0.28, 0.67}
\definecolor{amber}{rgb}{1.0, 0.49, 0.0}
\newcommand{\kibitz}[2]{\ifnum\Comments=1\textcolor{#1}{#2}\fi}
\title{Context-aware learning of hierarchies of low-fidelity models for multi-fidelity uncertainty quantification}
\author{Ionu\cb{t}-Gabriel Farca\cb{s}\thanks{Oden Institute for Computational Engineering and Sciences, The University of Texas at Austin}, Benjamin Peherstorfer\thanks{Courant Institute of Mathematical Sciences, New York University}, Frank Jenko\thanks{Max Planck Institute for Plasma Physics}, \\ Tobias Neckel\thanks{Department of Informatics, Technical University of Munich}, and Hans-Joachim Bungartz\footnotemark[4]}
\begin{document}

\maketitle

\begin{abstract}

Multi-fidelity Monte Carlo methods leverage low-fidelity and surrogate models for variance reduction to make tractable uncertainty quantification even when numerically simulating the physical systems of interest with high-fidelity models is computationally expensive. 
This work proposes a context-aware multi-fidelity Monte Carlo method that optimally balances the costs of training low-fidelity models with the costs of Monte Carlo sampling. It generalizes the previously developed context-aware bi-fidelity Monte Carlo method to hierarchies of multiple models and to more general types of low-fidelity models.
When training low-fidelity models, the proposed approach takes into account 
the context in which the learned low-fidelity models will be used, namely for variance reduction in Monte Carlo estimation, which allows it to find optimal trade-offs between training and sampling to minimize upper bounds of the mean-squared errors of the estimators for given computational budgets. This is in stark contrast to traditional surrogate modeling and model reduction techniques that construct low-fidelity models with the primary goal of approximating well the high-fidelity model outputs and typically ignore the context in which the learned models will be used in upstream tasks. The proposed context-aware multi-fidelity Monte Carlo method applies to hierarchies of a wide range of types of low-fidelity models such as sparse-grid and deep-network models. Numerical experiments with the gyrokinetic simulation code \textsc{Gene} show speedups of up to two orders of magnitude compared to standard estimators when quantifying uncertainties in small-scale fluctuations in confined plasma in fusion reactors. 
This corresponds to a runtime reduction from 72 days to about four hours on one node of the Lonestar6 supercomputer at the Texas Advanced Computing Center.

\end{abstract}  

\section{Introduction} \label{sec:Intro}

Uncertainty quantification is an essential building block for achieving predictive numerical simulations of physical systems. 
To make accurate Monte Carlo estimation of uncertainties tractable even when simulations of the physical system of interest are computationally expensive, multi-fidelity methods rely on low-fidelity or surrogate models: the low-fidelity models are leveraged for variance reduction to achieve speedups and the high-fidelity models are occasionally evaluated to guarantee unbiasedness; see \cite{PWG18} for a survey on multi-fidelity methods. 
However, if low-fidelity models are not readily available, then they need to be constructed and trained first, which can incur additional computational costs and require additional evaluations of the high-fidelity models to generate training data.

In this work, we build on the context-aware bi-fidelity Monte Carlo method introduced in \cite{Pe19} and propose the context-aware multi-fidelity Monte Carlo (CA-MFMC) method that trades off the costs of training hierarchies of multiple low-fidelity models with the costs of Monte Carlo sampling to obtain multi-fidelity estimators that minimize upper bounds of the mean-squared errors for given computational budgets. 
The proposed approach is context-aware \cite{AP20,Farcas2020,Pe19,WP22,9867770} in the sense that the low-fidelity models are trained to maximize variance reduction in Monte Carlo estimation. This means that the context in which the learned models will be used, namely for variance reduction in Monte Carlo estimation, is taken into account during training, which distinguishes it from traditional surrogate modeling and model reduction techniques that construct low-fidelity models with the primary goal of approximating well the high-fidelity model outputs and typically ignore the context in which the learned models are ultimately used \cite{SIREV,RozzaPateraSurvey}.
Our proposed CA-MFMC method can be combined with a wide range of types of data-fit low-fidelity models models such as sparse-grid-based models and deep-network models. 
We show that CA-MFMC achieves speedups of up to two orders of magnitude compared to single-fidelity Monte Carlo and standard multi-fidelity estimators when quantifying uncertainties in a plasma micro-turbulence scenario from the ASDEX Upgrade Experiment\footnote{\url{https://www.ipp.mpg.de/16195/asdex}}.

We build on multi-fidelity Monte Carlo (MFMC) estimators \cite{KGW21,NME:NME4761, PWK16MFMCAsymptotics,PWG16,PWG18,Qi18,Gr22,egusphere-2022-797} that leverage a given hierarchy of low-fidelity models for variance reduction. 
Several works have employed MFMC estimators to quantify uncertainties in plasma micro-turbulence simulations \cite{Farcas2020,Ko22} and for estimating statistics in collisionless energetic particle confinement in stellerators \cite{LCP22}. 
See also \cite{FMJ22} for other techniques for uncertainty quantification in plasma simulations. None of these methods trade off the training of low-fidelity models with sampling, however.
The works \cite{Go20,doi:10.1137/19M1263534,doi:10.1137/20M1321607} formulate generalized control variate techniques for multi-fidelity uncertainty propagation. There is a wide range of other multi-fidelity techniques that are based on other concepts than control variates and aim to find surrogate models from multi-fidelity information sources such as the collocation approach introduced in \cite{doi:10.1137/130929461,HAMPTON2018315} and the multi-fidelity networks proposed in \cite{Gorodetsky_2020,Gorodetsky2021}; see also \cite{Newberry2022,De_2020,RAZI2019992,doi:10.1137/21M1408312}. 
There have been extensions to the multilevel Monte Carlo method \cite{cliffe_multilevel_2011,Gi08,teckentrup_further_2013} that consider spatially-adaptive mesh refinements and optimal mesh hierarchies \cite{Collier2015,doi:10.1137/15M1016448,Haji-Ali2016}.
However, in that line of work, the low-fidelity models are coarse-grid approximations and thus the costs of constructing low-fidelity models are typically considered to be negligible and therefore are ignored. In contrast, we consider more general types of low-fidelity models such as data-fit models that incur training costs.

The first work that considered trading off training low-fidelity models and multi-fidelity Monte Carlo estimation is \cite{Pe19}, which studies the bi-fidelity setting in which the low-fidelity model has algebraic accuracy and cost rates. In \cite{https://doi.org/10.48550/arxiv.2201.10745} a similar trade off is considered in the bi-fidelity case but with more general cost and error rates and a specific focus on polynomial-chaos-based surrogate models. We go far beyond by introducing the CA-MFMC estimator based on context-aware learning that learns hierarchies of more than one low-fidelity model for variance reduction. 
We first extend the work on a single low-fidelity model with algebraic cost and error rates in \cite{Pe19} to apply to low-fidelity models with more general rates. Key is that the correspond bounds can be nested, which motivates the sequential training of low-fidelity models to obtain a hierarchy. 
In the proposed sequential training approach to fit hierarchies of low-fidelity models for CA-MFMC estimators, each step leads to an optimal trade off between training and sampling. 
This leads to a context-aware learning approach because the models are learned such that the CA-MFMC estimator achieves a low mean-squared error, rather than being trained to accurately approximate high-fidelity model outputs as in traditional model reduction.

To demonstrate the performance of the proposed CA-MFMC estimator, we apply it to quantifying uncertainties in plasma micro-turbulence simulations motivated by the ITER experiment\footnote{\url{https://www.iter.org}}. 
The goal of the ITER experiment is to create, for the very first time, a self-sustained plasma in the laboratory. 
A physics obstacle are the above-mentioned small-scale fluctuations which cause energy loss rates despite sophisticated plasma confinements via strong and shaped magnetic fields. 
Building on the gyrokinetic simulation code GENE \cite{Je00}, we show that the proposed estimator achieves speedups of up to two orders of magnitude compared to single-fidelity Monte Carlo and standard multi-fidelity estimators, which translates into a runtime reduction from 72 days to about four hours on one node of the Lonestar6 supercomputer at the Texas Advanced Computing Center\footnote{\url{https://www.tacc.utexas.edu/systems/lonestar6}}.

The remainder of this paper is organized as follows.
Section~\ref{sec:background} introduces the notation and summarizes the traditional MFMC algorithm \cite{NME:NME4761, PWG16} and the bi-fidelity context-aware algorithm formulated in \cite{Pe19}.
Section~\ref{sec:camfmc} introduces our context-aware learning approach for low-fidelity models with general cost/error rates and multiple low-fidelity models for multi-fidelity sampling methods.
Section~\ref{sec:results} presents numerical results in two scenarios: a heat conduction problem defined on a two-dimensional spatial domain with nine uncertain parameters and a realistic plasma micro-turbulence scenario with $12$ uncertain inputs, for which one realization of the uncertain inputs requires a total runtime of about $411$ seconds on $32$ cores.
The code and data to reproduce our numerical results are available at \texttt{https://github.com/ionutfarcas/context-aware-mfmc}.
\section{Preliminaries} \label{sec:background}
This section reviews traditional MFMC estimators \cite{NME:NME4761,PWG16} and the bi-fidelity context-aware MFMC estimator \cite{Pe19} that uses a high-fidelity model and a single low-fidelity model only.

\subsection{Static multi-fidelity Monte Carlo estimation} \label{subsec:static_MFMC}
Let $f^{(0)}: \Xcal \to \Ycal$ represent the input-output response of a potentially expensive-to-evaluate computational model. The input domain is $\Xcal \subset \mathbb{R}^d$ and the output is scalar with output domain $\Ycal \subset \mathbb{R}$.
We consider the situation where the input $\boldsymbol{\theta} = [\theta_1, \theta_2, \ldots, \theta_d]^T$ is a realization of a random variable $\bfTheta = [\Theta_1, \dots, \Theta_d]^T$ so that the output $f^{(0)}(\bftheta)$ becomes a realization of a random variable $f^{(0)}(\bfTheta)$ too. 
We denote the probability density function of $\bfTheta$ as $\pi$. 
Our goal is to estimate the expected value of $f^{(0)}(\bfTheta)$
    \begin{equation*} 
        \mu_0 = \E[f^{(0)}(\bfTheta)] = \int_{\Xcal} f^{(0)}(\boldsymbol{\theta}) \pi(\boldsymbol{\theta}) \mathrm{d}\boldsymbol{\theta}\,.
    \end{equation*}

The MFMC estimator introduced in \cite{NME:NME4761,PWG16} combines a hierarchy of $k + 1$ models, consisting of a high-fidelity model $f^{(0)}$ and $k$ low-fidelity models $f^{(1)}$, $f^{(2)}$, \ldots, $f^{(k)}$. 
The accuracy of the low-fidelity models is measured by their Pearson correlation coefficient with respect to~$f^{(0)}$,
\begin{equation*}
\rho_{j} = \frac{\mathrm{Cov}[f^{(0)}, f^{(j)}]}{\sigma_0\sigma_j} \in [-1, 1], \quad j = 1, \ldots, k,
\end{equation*}
where $\sigma_j^2$ denotes the variance of the output random variable $f^{(j)}(\bfTheta)$ for $j=0, 1, \ldots, k$ and $\operatorname{Cov}[\cdot,\cdot]$ is the covariance.
The evaluation costs of the models are $0 < w_1, \dots, w_k$, respectively.
We normalize the cost of evaluating the high-fidelity model such that $w_0 = 1$, without loss of generality.
The models are assumed to satisfy the following ordering
\begin{equation} \label{eq:MFMC_ordering}
    1 = \left|\rho_0\right| > \left|\rho_1\right| > \ldots > \left|\rho_k\right|, \quad \frac{w_{j-1}}{w_j} > \frac{\rho_{j-1}^2 - \rho_{j}^2}{\rho_{j}^2 - \rho_{j+1}^2}, \quad j = 1, \ldots, k, 
\end{equation}
where $\rho_{\ell} = 0$ for $\ell \geq k + 1$. 

Let now $m_j \in \mathbb{N}$ for $j = 0, 1\ldots, k$ denote the number of evaluations of model $f^{(j)}$, which satisfy
\begin{equation*}
0 \leq m_0 \leq m_1 \leq \ldots \leq m_k
\end{equation*}
because of \eqref{eq:MFMC_ordering}. 
Consider $m_k$ independent and identically distributed (i.i.d.)~samples drawn from the input density $\pi$:
\begin{equation*}
    \boldsymbol{\theta}_1, \boldsymbol{\theta}_2, \ldots, \boldsymbol{\theta}_{m_k}.
\end{equation*}
To obtain the MFMC estimator, model $f^{(j)}$ is evaluated at the first $m_j$ samples $\boldsymbol{\theta}_1, \boldsymbol{\theta}_2, \ldots, \boldsymbol{\theta}_{m_j}$ to obtain output random variables
\begin{equation}\label{eq:Prelim:ModelOutputs}
f^{(j)}(\boldsymbol{\theta}_1), f^{(j)}(\boldsymbol{\theta}_2), \ldots, f^{(j)}(\boldsymbol{\theta}_{m_j})\,,
\end{equation}
for $j = 0, 1, \ldots, k$. 
From \eqref{eq:Prelim:ModelOutputs}, derive the following standard MC estimators for $j = 1, \dots, k$:
\begin{equation*}
\hat{E}_{m_j}^{(j)} = \frac{1}{m_j} \sum_{i=1}^{m_j} f^{(j)}(\boldsymbol{\theta}_i), \quad
\hat{E}_{m_{j - 1}}^{(j)} = \frac{1}{m_{j - 1}} \sum_{i=1}^{m_{j - 1}} f^{(j)}(\boldsymbol{\theta}_i) 
\end{equation*}
as well as
\[
\hat{E}_{m_0}^{(0)} = \frac{1}{m_0}f^{(0)}(\boldsymbol{\theta}_i)\,.
\]
The MFMC estimator is
\begin{equation*}
\hat{E}^{\mathrm{MFMC}} = \hat{E}_{m_0}^{(0)} + \sum_{j=1}^k \alpha_j (\hat{E}_{m_j}^{(j)} - \hat{E}_{m_{j-1}}^{(j)})
\end{equation*}
with the coefficients $\alpha_1, \dots, \alpha_k \in \mathbb{R}$. 
The total computational cost of the MFMC estimator is therefore
\[
p = \sum_{j = 0}^k m_jw_j\,.
\]
Fixing $p$ and then selecting $m_0^*, m_1^*, \ldots, m_k^*$ model evaluations and $\alpha_1^*, \alpha_2^*, \ldots, \alpha_k^*$ coefficients such that the variance of the MFMC estimator $\hat{E}^{\mathrm{MFMC}}$ is minimized, gives the MSE
\begin{equation}\label{eq:mse_mfmc}
    \mathrm{MSE}[\hat{E}^{\mathrm{MFMC}}] = \frac{\sigma_0^2}{p} \left( \sum_{j=0}^k \sqrt{w_j (\rho_{j}^2 - \rho_{j+1}^2)} \right) ^2.
\end{equation}
The optimal $m_0^*, m_1^*, \ldots, m_k^*$ model evaluations and $\alpha_1^*, \alpha_2^*, \ldots, \alpha_k^*$ coefficients are available in closed form for a given budget $p$; see \cite{PWG16} for details.
    
The MFMC estimator leverages the cheaper-to-evaluate low-fidelity models with the aim to achieve a lower MSE than a standard MC estimator with the same costs.
That is, given $m$ i.i.d.~samples from $\boldsymbol{\Theta}$, the standard MC mean estimator is
\begin{equation}\label{eq:mean_std_mc}    \hat{E}_m^{(0)} = \frac{1}{m} \sum_{i=1}^m f^{(0)}(\boldsymbol{\theta}_i).
\end{equation}
The computational cost of the Monte Carlo estimator \eqref{eq:mean_std_mc} of $\mathbb{E}[f^{(0)}(\boldsymbol{\Theta})]$ is $p = mw_0 = m$ because the function $f^{(0)}$ is evaluated at $m$ realizations and the cost of each evaluation of $f^{(0)}$ is $w_0 = 1$. 
The MSE of the standard MC estimator $\hat{E}_n^{(0)}$ reads
\begin{equation}\label{eq:mse_mc}
    \mathrm{MSE}[\hat{E}_m^{(0)}] = \frac{\sigma_0^2}{p} w_0 = \frac{\sigma_0^2}{p}.
\end{equation}

\subsection{Context-aware bi-fidelity Monte Carlo estimator with algebraic accuracy and cost rates}\label{sec:Prelim:AMFMC}
The work \cite{Pe19} introduces a context-aware bi-fidelity MFMC approach that trades off the training costs of constructing a low-fidelity model $f^{(1)}_{n}$ and sampling.  
Following \cite{Pe19}, the low-fidelity model $f^{(1)}_{n}$ is obtained via a training process such as data-fit models and reduced models. 
Correspondingly, the subscript $n$ refers to the number of high-fidelity evaluations that are needed to train $f^{(1)}_{n}$. 
For example, in case of obtaining $f^{(1)}_{n}$ via training a neural network, the subscript $n$ refers to the number of training input-output pairs obtained with the high-fidelity model $f^{(0)}$. 
This also means that the correlation coefficient $\rho_1(n)$ between $f^{(0)}(\bfTheta)$ and $f^{(1)}_{n}(\bfTheta)$ as well as the evaluation cost $w_1(n)$ of $f^{(1)}_{n}$ depend on $n$.

The dependency of the correlation and costs on $n$ are described as follows: to trade off training low-fidelity model and sampling, the work \cite{Pe19} makes the assumption that the correlation coefficient between the high-fidelity output random variable $f^{(0)}(\bfTheta)$ and low-fidelity random variable $f^{(1)}_{n}(\bfTheta)$ is bounded by
\begin{equation*}
    1 - \rho_1^2(n) \leq c_1 n^{-\alpha}
\end{equation*}
with respect to $n$,  where $c_1, \alpha > 0$ are constants. 
The cost $w_1(n)$ of evaluating the low-fidelity model is bounded by
\begin{equation*}
    w_1(n) \leq c_2 n^{\beta}
\end{equation*}
with constants $c_2, \beta > 0$.
    
A budget $p$ corresponds to $p$ high-fidelity model evaluations because we have $w_0 = 1$. Thus, if $n$ high-fidelity model samples are used for training $f^{(1)}_{n}$, then a budget of $p - n$ is left for sampling. 
The corresponding context-aware bi-fidelity estimator is
\[
\hat{E}^{\text{CA-MFMC}}_{n} = \hat{E}_{m_0^*}^{(0)} + \alpha_1^* (\hat{E}_{m_1^*, n}^{(1)} - \hat{E}_{m_0^*, n}^{(1)})
\]
where $m_0^*, m_1^*, \alpha_1^*$ are chosen to minimize the MSE of $\hat{E}^{\text{CA-MFMC}}_{n}$ for a given $p$ and $n$. Consequently, the MSE of $\hat{E}^{\text{CA-MFMC}}_{n}$ depends on $p$ and $n$ 
\[
\operatorname{MSE}(\hat{E}^{\text{CA-MFMC}}_n) = \frac{\sigma_0^2}{p - n} \left( \sqrt{1 - \rho_{1}^2(n)} + \sqrt{w_1(n)\rho_{1}^2(n)} \right) ^2 \,,
\]
which is in contrast to the MFMC estimator $\hat{E}^{\mathrm{MFMC}}$ for which the MSE \eqref{eq:mse_mfmc} depends on $p$ only and is independent of potential training costs of constructing low-fidelity models.

If the budget $p$ is fixed, $n$ can be chosen by minimizing the upper bound of the MSE
\begin{equation}\label{eq:AMFMCBound}
    \operatorname{MSE}(\hat{E}^{\text{CA-MFMC}}_n) \leq \frac{2\sigma_0^2}{p - n}\left( c_1 n^{-\alpha} +  c_2 n^{\beta} \right)\,.
\end{equation}
The work \cite{Pe19} shows that there exists a unique $n^*$ that minimizes \eqref{eq:AMFMCBound} with respect to $n$ for a given $p$ under certain assumptions; however, no closed form expression of $n^*$ is available and thus $n^*$ needs to be found numerically. The optimum $n^*$ is bounded independently of the budget $p$; see \cite{Pe19} for details.
\section{Context-aware multi-fidelity Monte Carlo estimation} \label{sec:camfmc}
This section presents the methodological novelty of this paper.
We introduce context-aware learning for MFMC for low-fidelity models with general cost/error rates and hierarchies of multiple---more than one---low-fidelity models. 

\subsection{Setup with multiple models} \label{subsec:camfmc_prelim}
We now consider multiple low-fidelity models $f_{n_1}^{(1)}, \dots, f_{n_k}^{(k)}$ that take $n_1, \dots, n_k$ evaluations, respectively, of the high-fidelity model to be trained. 
We refer to $n_1, \dots, n_k$ as the number of training samples to fit the low-fidelity models. 
We then define the CA-MFMC estimator as
\begin{equation*}
\hat{E}_{n_1, \dots, n_k}^{\text{CA-MFMC}} = \hat{E}_{m_0^*}^{(0)} + \sum_{j = 1}^k\alpha_j^*\left(\hat{E}_{m_j^*}^{(j)} - \hat{E}_{m_{j - 1}^*}^{(j)}\right)\,,
\end{equation*}
where the $m_0^*, \dots, m_k^*$ and $\alpha_1^*, \dots, \alpha_k^*$ minimize the MSE of $\hat{E}_{n_1, \dots, n_k}^{\text{CA-MFMC}}$ for given $n_1, \dots, n_k$ and budget $p$.  
The MSE of the CA-MFMC estimator is
\begin{equation}\label{eq:mse_camfmc}
\mathrm{MSE}[\hat{E}^{\mathrm{CA-MFMC}}_{n_1, \dots, n_k}] = \frac{\sigma_0^2}{p - \sum_{i = 1}^k n_i} \left( \sum_{j=0}^k \sqrt{w_j(n_j) (\rho_{j}^2(n_j) - \rho_{j+1}^2(n_{j  + 1}))} \right) ^2\,,
\end{equation}
where now the MSE depends on the $n_1, \dots, n_k$ training samples used for constructing the low-fidelity models. 
We are interested in choosing $n_1, \dots, n_k$ to minimize an upper bound of the MSE \eqref{eq:mse_camfmc} so that the training costs given by $n_1, \dots, n_k$ and the costs of taking samples from the high- and the low-fidelity output random variables are balanced.

We make the following assumptions on the accuracy and cost behavior of the low-fidelity models with respect to the number of training samples $n_1, \dots, n_k$: 
\begin{assumption} \label{assump:acc_rates}
    For all $j = 1, \dots, k$, there exists a constant $c_{a, j} > 0$ and a positive, decreasing, 
    at least twice continuously differentiable function $r_{a, j}(n_j) : (0, \infty) \rightarrow (0, \infty)$ such that
    \begin{equation*} 
    1 - \rho_{j}^2(n_j) \leq c_{a, j} r_{a, j}(n_j), \quad j = 1, \ldots, k\,.
    \end{equation*}
\end{assumption}
\begin{assumption} \label{assump:cost_rates}
    For all $j = 1, \dots, k$, there exists a constant $c_{c, j} > 0$ and a positive, increasing, at least twice continuously differentiable function $r_{c, j}(n_j)  : (0, \infty) \rightarrow (0, \infty)$ such that the evaluation costs are bounded as
    \begin{equation*} 
    w_j(n_j) \leq c_{c, j} r_{c, j}(n_j), \quad j = 1, \ldots, k.
    \end{equation*}
\end{assumption}

Assumptions \ref{assump:acc_rates}--\ref{assump:cost_rates} hold for a wide range of typical error and cost rates: the functions $r_{a,j}(n) = n^{-\alpha}, \alpha >0$ and $r_{a,j}(n) = \mathrm e^{-\alpha n}, \alpha > 0$ satisfy Assumption~\ref{assump:acc_rates} for $n \in (0, \infty)$. 
The functions $r_{c,j}(n) = n^{\alpha}, \alpha > 1$ and $r_{c,j}(n) = \mathrm e^{\alpha n}, \alpha > 0$ satisfy Assumption~\ref{assump:cost_rates} for $n \in (0, \infty)$. Additionally, the error and cost rates given here are strictly convex.

\subsection{Context-aware multi-fidelity Monte Carlo sampling with one low-fidelity model} \label{subsubsec:camfmc_one_model_setup}
Let us first consider only one low-fidelity model, which means that $k = 1$. 
The following is novel compared to what was introduced in \cite{Pe19} because here we allow more general error and cost rates than \cite{Pe19}; cf.~Section~\ref{sec:Prelim:AMFMC} where \cite{Pe19} is reviewed.

We obtain the following upper bound of the MSE of the CA-MFMC estimator \eqref{eq:mse_camfmc},
\begin{align}
   \mathrm{MSE}[\hat{E}^{\mathrm{CA-MFMC}}_{n_1}] = & \frac{\sigma_0^2}{p - n_1} \left( \sqrt{1 - \rho_{1}^2(n_1)} + \sqrt{w_1(n_1)\rho_{1}^2(n_1)} \right) ^2 \nonumber \\
    = & \frac{\sigma_0^2}{p - n_1} \left( 1 - \rho_{1}^2(n_1) + w_1(n_1)\rho_{1}^2(n_1) + 2 \sqrt{1 - \rho_1^2(n_1)} \sqrt{w_1(n_1)\rho_{1}^2(n_1)} \right) \nonumber   \\
   \leq & \label{eq:CAMFMC_1D_MSE_upper_bound_mean_inequality} \frac{\sigma_0^2}{p - n_1} \left( 2(1 - \rho_{1}^2(n_1)) + 2w_1(n_1)\rho_{1}^2(n_1) \right)   \\
    \leq& \label{eq:CAMFMC_1D_MSE_upper_bound_rho} \frac{2\sigma_0^2}{p - n_1} \left( 1 - \rho_{1}^2(n_1) + w_1(n_1) \right)   \\
    \leq& \label{eq:CAMFMC_1D_MSE_upper_bound_rates} \frac{2\sigma_0^2}{p - n_1}\left( c_{a,1}r_{a,1}\left( n_1 \right) +  c_{c,1} r_{c, 1}\left( n_1 \right) \right) ,
    \end{align}
where inequality \eqref{eq:CAMFMC_1D_MSE_upper_bound_mean_inequality} is due to the means inequality $\forall a, b \in \mathbb{R}_+ \Rightarrow 2\sqrt{a}\sqrt{b} \leq a + b$,  inequality \eqref{eq:CAMFMC_1D_MSE_upper_bound_rho} results from $\rho_1^2 \leq 1$ and \eqref{eq:CAMFMC_1D_MSE_upper_bound_rates} is due to Assumptions \ref{assump:acc_rates} and \ref{assump:cost_rates}.

The objective that we want to minimize with respect to $n_1$ is
\begin{equation}\label{eq:CAMFMC_obj_1D}
    u_1 : [1, p-1] \rightarrow (0, \infty), n_1 \mapsto \frac{1}{p - n_1}\left( c_{a,1}r_{a,1}\left( n_1 \right) +  c_{c,1} r_{c, 1}\left( n_1 \right) \right)\,.
\end{equation}
The following proposition clarifies when a unique global minimum exists.

\begin{proposition}\label{prop:obj_1D_convexity}
Consider function $u_1$ defined in \eqref{eq:CAMFMC_obj_1D} with $r_{a,1}$ and $r_{c,1}$ satisfying Assumptions~\ref{assump:acc_rates} and~\ref{assump:cost_rates}, respectively. Additionally assume that
\begin{equation}
c_{a,1}r_{a,1}^{\prime\prime}(n_1) + c_{c,1}r_{c,1}^{\prime\prime}(n_1) > 0
\label{eq:ConvexityAuxiliaryAsm}
\end{equation}
holds for all $n_1 \in [1, p - 1]$, where $r_{a,1}^{\prime\prime}$ and $r_{c,1}^{\prime\prime}$ are the second derivatives of $r_{a, 1}$ and $r_{c,1}$, respectively. Then, the function $u_1$ has a unique global minimum $n_1^* \in [1, p - 1]$.
\end{proposition}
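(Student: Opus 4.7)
The plan is to reduce the problem to a one-variable calculus argument on the compact interval $[1,p-1]$, using strict monotonicity of a suitable auxiliary function rather than trying to establish convexity of $u_1$ itself. Writing $g(n_1) = c_{a,1} r_{a,1}(n_1) + c_{c,1} r_{c,1}(n_1)$, I express $u_1(n_1) = g(n_1)/(p-n_1)$ and apply the quotient rule to get $u_1'(n_1) = h(n_1)/(p-n_1)^2$, where $h(n_1) := g'(n_1)(p-n_1) + g(n_1)$. Since the denominator is strictly positive on $[1,p-1]$, the critical points and the sign of $u_1'$ are controlled entirely by $h$.

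The key step is to show that $h$ is strictly increasing. Differentiating, the mixed terms cancel and I am left with $h'(n_1) = g''(n_1)(p-n_1)$. By Assumptions~\ref{assump:acc_rates}--\ref{assump:cost_rates} the rate functions are twice continuously differentiable, so $h'$ is well defined, and the hypothesis~\eqref{eq:ConvexityAuxiliaryAsm} gives $g''(n_1) > 0$; combined with $p - n_1 \geq 1 > 0$ this yields $h'(n_1) > 0$ throughout $[1,p-1]$. Hence $h$ is strictly increasing, has at most one zero in $[1,p-1]$, and $u_1'$ changes sign at most once, and only from negative to positive.

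To conclude existence and uniqueness of the global minimum I would split into three cases based on the signs of $h$ at the endpoints. If $h(1) \geq 0$, strict monotonicity of $h$ forces $h > 0$ on $(1,p-1]$, so $u_1$ is strictly increasing there and attains its unique minimum at $n_1^* = 1$; symmetrically, if $h(p-1) \leq 0$ the unique minimum is at $n_1^* = p-1$; otherwise $h(1) < 0 < h(p-1)$ and the intermediate value theorem supplies a single interior zero $n_1^*$ at which $u_1'$ transitions from negative to positive, which is therefore the unique global minimum. In every case, continuity of $u_1$ on the compact interval $[1,p-1]$ guarantees that the infimum is attained.

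The main step carrying the weight of the argument is the derivative computation for $h$: one has to see the cancellation $h'(n_1) = g''(n_1)(p-n_1)$ cleanly enough that the role of~\eqref{eq:ConvexityAuxiliaryAsm} becomes transparent, after which everything reduces to routine real analysis. A minor subtlety lies in the boundary-degenerate cases where $h$ vanishes at an endpoint, but strict monotonicity of $h$ on the open interval ensures that such a zero cannot propagate into the interior, so the global minimum is attained at exactly one point of $[1,p-1]$.
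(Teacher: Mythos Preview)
Your proof is correct and follows essentially the same approach as the paper: both introduce the auxiliary function $g = h_1 = c_{a,1}r_{a,1} + c_{c,1}r_{c,1}$, compute $u_1'$ via the quotient rule, and use the strict-convexity hypothesis~\eqref{eq:ConvexityAuxiliaryAsm} together with the same three-way case split on the sign of $u_1'$ to conclude uniqueness. Your packaging via strict monotonicity of the numerator $h(n_1)=g'(n_1)(p-n_1)+g(n_1)$ (from $h'(n_1)=g''(n_1)(p-n_1)>0$) is a slightly cleaner variant of the paper's computation that $u_1''(n_1)=h_1''(n_1)/(p-n_1)+\tfrac{2}{p-n_1}u_1'(n_1)>0$ at every stationary point.
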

\begin{proof}
Define the function $h_1(n_1) = c_{a,1}r_{a,1}(n_1) + c_{c,1}r_{c,1}(n_1)$ and notice that 
\begin{equation}
h_1^{\prime\prime}(n_1) = c_{a,1}r_{a,1}^{\prime\prime}(n_1) + c_{c,1}r_{c,1}^{\prime\prime}(n_1) > 0\,,\qquad n_1 \in [1, p - 1]
\label{eq:StrictConvexH}
\end{equation}
due to \eqref{eq:ConvexityAuxiliaryAsm}, which means that $h_1$ is strictly convex in $[1, p - 1]$. 
We now consider $u_1$ and its first derivative
\begin{equation}
u_1^{\prime}(n_1) = \frac{(p - n_1)h^{\prime}_1(n_1) + h_1(n_1)}{(p - n_1)^2}\,.
\label{eq:proof:u1Prime}
\end{equation}

Case 1: If $u_1'(n_1) > 0$ for all $n_1 \in [1, p - 1]$, then $u_1$ is strictly increasing and the minimum of $u_1$ in $[1, p - 1]$ is taken at the left boundary $n_1^* = 1$, which is the global unique minimum.

Case 2: Analogously to Case 1, if $u_1'(n_1) < 0$ for all $n_1 \in [1, p - 1]$, then $u_1$ is strictly decreasing and the unique minimum is $p - 1$. 

Case 3: There is a stationary point $n_1^* \in [1, p - 1]$ such that $u_1'(n_1^*) = 0$. At a stationary point $n_1^*$ of $u_1$, the second derivative,
\[
u_1''(n_1) = \frac{h_1''(n_1)}{p - n_1} + \frac{2}{p - n_1}u_1'(n_1)\,,
\]
is positive $u_1''(n_1^*) > 0$ because of \eqref{eq:StrictConvexH} and $u'_1(n_1^*) = 0$. Thus, together with the fact that function $u_1$ is twice continuously differentiable and univariate, we have that $u''_1$ is positive in a neighborhood about all stationary points, because the inequality $h_1''(n_1^*) > 0$ is strict. Thus, in the interior $(1, p - 1)$, there can only be minima, which also means that a the boundaries of the interval $[1, p - 1]$ can only be maxima. This means that there exists only one minimum, which shows uniqueness. 
\end{proof}

\begin{rmk}
Assumption~\eqref{eq:ConvexityAuxiliaryAsm} in Proposition~\ref{prop:obj_1D_convexity} is satisfied if $r_{a,1}$ is strictly convex and $r_{c,1}$ is convex, which holds for the examples given in Section~\ref{subsec:camfmc_prelim}.
\end{rmk}

We will next show that that the minimizer of \eqref{eq:CAMFMC_obj_1D} is also bounded from above, independent of the computational budget, $p$, which implies that after a finite number of samples, all high-fidelity model samples should be used in the Monte Carlo estimator rather than being used to improve the low-fidelity model.

\begin{proposition} \label{prop:obj_1D_n_star_boundness}
Let Proposition~\ref{prop:obj_1D_convexity} apply with condition \eqref{eq:ConvexityAuxiliaryAsm} satisfied for all $n_1 \in (0, \infty)$. Additionally, assume that there exists an $\bar{n}_1 \in (0, \infty)$ such that
\begin{equation}\label{eq:Uniqueness:ExistStat}
c_{a,1}r_{a,1}'(\bar{n}_1) + c_{c,1}r_{c,1}'(\bar{n}_1) = 0
\end{equation}
holds. Then $\bar{n}_1$ is unique and the minimum $n_1^*$ of $u_1$ derived in Proposition~\ref{prop:obj_1D_convexity} is bounded from above independently of the budget $p$ as
\begin{equation}
n_1^* \leq \max\{1, \bar{n}_1\}\,.
\label{eq:UpperBoundN1Star}
\end{equation}
\end{proposition}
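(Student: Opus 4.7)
The plan is to recycle the convex auxiliary $h_1(n_1) = c_{a,1}r_{a,1}(n_1) + c_{c,1}r_{c,1}(n_1)$ from the proof of Proposition~\ref{prop:obj_1D_convexity}, because the analysis of $u_1$ has already been reduced to the behavior of $h_1$ and $h_1'$ via the formula \eqref{eq:proof:u1Prime}. Extending the convexity hypothesis \eqref{eq:ConvexityAuxiliaryAsm} to all of $(0,\infty)$ gives $h_1''(n_1) > 0$ on $(0,\infty)$, so $h_1'$ is strictly increasing. A strictly increasing function has at most one zero, so any $\bar{n}_1$ satisfying \eqref{eq:Uniqueness:ExistStat} is unique; this settles the uniqueness claim.

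For the upper bound \eqref{eq:UpperBoundN1Star}, I would show that $u_1$ is strictly increasing to the right of $\bar{n}_1$. Concretely, for any $n_1 \in [\max\{1,\bar{n}_1\}, p-1]$, monotonicity of $h_1'$ gives $h_1'(n_1) \geq h_1'(\bar{n}_1) = 0$. Combining this with $h_1(n_1) > 0$ and $p - n_1 > 0$ in the formula \eqref{eq:proof:u1Prime} yields
\[
u_1'(n_1) \;=\; \frac{(p - n_1)\,h_1'(n_1) + h_1(n_1)}{(p - n_1)^2} \;>\; 0,
\]
so $u_1$ is strictly increasing on $[\max\{1,\bar{n}_1\}, p-1]$. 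Hence the unique minimizer $n_1^*$ of $u_1$ on $[1, p - 1]$, whose existence is guaranteed by Proposition~\ref{prop:obj_1D_convexity}, cannot lie strictly to the right of $\max\{1,\bar{n}_1\}$, which proves \eqref{eq:UpperBoundN1Star}. The bound is independent of $p$ because $\bar{n}_1$ is defined purely through the rate functions $r_{a,1}, r_{c,1}$ and the constants $c_{a,1}, c_{c,1}$, none of which involves $p$.

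I do not anticipate a real obstacle here; the proof amounts to a single monotonicity observation stacked on top of Proposition~\ref{prop:obj_1D_convexity}. The only bookkeeping worth highlighting is the role of the maximum in \eqref{eq:UpperBoundN1Star}: when $\bar{n}_1 < 1$, the interval $[\max\{1,\bar{n}_1\}, p-1]$ coincides with the full feasible region $[1, p-1]$, so the monotonicity argument forces Case~1 of Proposition~\ref{prop:obj_1D_convexity} and gives $n_1^* = 1$; when $\bar{n}_1 \geq p - 1$, the bound is trivial since $n_1^* \leq p - 1 \leq \bar{n}_1$.
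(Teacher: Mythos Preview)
Your proposal is correct and follows essentially the same approach as the paper: both arguments use strict monotonicity of $h_1'$ (from $h_1''>0$ on $(0,\infty)$) to get uniqueness of $\bar{n}_1$, and then observe via \eqref{eq:proof:u1Prime} that $u_1'>0$ whenever $h_1'\geq 0$, which forces the minimizer to lie at or left of $\max\{1,\bar{n}_1\}$. The paper presents this via an explicit case split on the location of $\bar{n}_1$ relative to $1$ and $p-1$, whereas you compress the same reasoning into a single monotonicity statement on $[\max\{1,\bar{n}_1\},p-1]$; the content is the same.
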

\begin{proof}
Consider $h_1$ introduced in the proof of Proposition~\ref{prop:obj_1D_n_star_boundness}. There is an $\bar{n}_1 \in (0, \infty)$ with $h_1'(\bar{n}_1) = 0$ as given by \eqref{eq:Uniqueness:ExistStat}. The stationary point $\bar{n}_1$ is unique because $h_1'$ is strictly increasing because we assumed that  \eqref{eq:ConvexityAuxiliaryAsm} holds in $(0, \infty)$. Furthermore, the stationary point $\bar{n}_1$ is independent of $p$ because $h_1$ does not depend on $p$. 

Case 1: If $\bar{n}_1 < 1$, then this implies that $h_1' > 0$ (strictly increasing because $h_1'' > 0$) for all $[1, p - 1]$ for all $p \geq 1$ and thus $u_1' > 0$ in $[1, p - 1]$. Thus, $n_1^* =1$ and \eqref{eq:UpperBoundN1Star} is a bound independent of $p$. 

Case 2: Let now $\bar{n}_1 \geq 1$. First, consider budgets $p > \bar{n}_1 + 1$ so that $\bar{n}_1 \in [1, p - 1)$. Because $h_1'(\bar{n}_1) = 0$ with $\bar{n}_1 \in [1, p - 1)$ and $h_1'$ strictly increasing, we have that $h_1' > 0$ in $(\bar{n}_1, p - 1]$ and thus $u_1' > 0$ in $(\bar{n}_1, p - 1]$, which means $u_1$ cannot be strictly decreasing on all of $[1, p - 1]$. Thus, the minimum of $u_1$ in $[1, p - 1]$ is either at a stationary point of $u_1$ or at 1. If it is a stationary point, then $h_1'(n_1^*) < 0$ has to hold to obtain $u_1'(n_1^*) = 0$ and we obtain $h_1'(n_1^*) < 0 = h_1'(\bar{n}_1)$, which implies $n_1^* \leq \bar{n}_1$ because $h_1'$ is strictly increasing and thus \eqref{eq:UpperBoundN1Star} is a $p$-independent bound of $n_1^*$. If it $n_1^* = 1$, then $1 = n_1^* \leq \bar{n}_1$ still holds because $\bar{n}_1 \geq 1$. Second, consider budgets $p \leq \bar{n}_1 + 1$ so that $\bar{n}_1 \geq p - 1$. Then, it holds $n_1^* \leq p - 1 \leq \bar{n}_1$, which again leads to the bound \eqref{eq:UpperBoundN1Star}.

This shows that depending on the properties of $h_1$, the maximum of $1$ and $\bar{n}_1$ is an upper bound of~$n_1^*$. 
\end{proof}

\subsection{Context-aware multi-fidelity Monte Carlo sampling with two or more low-fidelity models} \label{subsubsec:camfmc_multiple_models}
The second novelty of our proposed context-aware approach is that we can consider more than just one low-fidelity model.
In the following, we introduce a sequential training approach to fit hierarchies of low-fidelity models for the CA-MFMC estimator, where in each step the optimal trade off between training and sampling is achieved.

We first show an upper bound of the MSE \eqref{eq:mse_camfmc} in terms of accuracy and cost rates for the case with $k > 1$ low-fidelity models: 
\begin{lemma}\label{lm:BoundMSEMulti}
Let Assumptions \ref{assump:acc_rates} and \ref{assump:cost_rates} hold. Then, the MSE \eqref{eq:mse_camfmc} of the CA-MFMC estimator $\hat{E}^{\mathrm{CA-MFMC}}_{n_1, \dots, n_k}$ can be bounded as 
\begin{equation} \label{eq:camfmc_ND_mse_upper_bound_rates}
\mathrm{MSE}[\hat{E}^{\mathrm{CA-MFMC}}_{n_1, \dots, n_k}] \leq
\frac{(k + 1) \sigma_0^2}{p_{k-1} - n_k}
\left(\kappa_{k-1} + w_{k-1}(n_{k - 1})c_{a,k}r_{a,k}(n_k) + c_{c,k}r_{c,k}(n_k)\right)\,,
\end{equation}
where
\begin{equation}
p_{k-1} = p - \sum_{j = 1}^{k-1}n_k, \quad p_0 = p\,,\qquad \kappa_{k - 1} = \sum_{j = 0}^{k-2}w_j(n_j)(1 - \rho_{j+1}^2(n_{j+1})), \quad \kappa_0 = 0\,.
\label{eq:PK-1Kappa}
\end{equation}
\end{lemma}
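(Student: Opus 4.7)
\emph{Proof plan.} The strategy is to (i) linearize the squared sum in \eqref{eq:mse_camfmc} via Cauchy--Schwarz so that the rate assumptions can be applied term-by-term, (ii) algebraically reshuffle the resulting sum using the identity $\rho_j^2 - \rho_{j+1}^2 = (1-\rho_{j+1}^2) - (1-\rho_j^2)$ together with the boundary conditions $\rho_0 = 1$ and $\rho_{k+1} = 0$ to expose a near-telescoping structure, (iii) isolate exactly the terms that depend on $n_k$ and apply Assumptions~\ref{assump:acc_rates}--\ref{assump:cost_rates} only for index $j=k$ (leaving the indices $0,\dots,k-1$ grouped into $\kappa_{k-1}$), and finally (iv) match the denominator $p - \sum_{i=1}^k n_i$ with $p_{k-1} - n_k$ from \eqref{eq:PK-1Kappa}.

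Concretely, I would first apply Cauchy--Schwarz in the form $\bigl(\sum_{j=0}^k a_j\bigr)^2 \leq (k+1)\sum_{j=0}^k a_j^2$ with $a_j = \sqrt{w_j(n_j)(\rho_j^2(n_j)-\rho_{j+1}^2(n_{j+1}))}$ to obtain
\[
\Bigl(\sum_{j=0}^k \sqrt{w_j(n_j)(\rho_j^2(n_j)-\rho_{j+1}^2(n_{j+1}))}\Bigr)^{\!2} \leq (k+1)\,S, \qquad S := \sum_{j=0}^k w_j(n_j)(\rho_j^2(n_j)-\rho_{j+1}^2(n_{j+1})).
\]
This is the analogue, for the multi-model case, of the mean-inequality step \eqref{eq:CAMFMC_1D_MSE_upper_bound_mean_inequality} used in the $k=1$ derivation.

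Next I would split $S$ at $j=k$: the last term is $w_k(n_k)\rho_k^2(n_k) \leq w_k(n_k)$ since $\rho_{k+1}=0$ and $\rho_k^2\leq 1$; for the remaining $j\in\{0,\dots,k-1\}$ I rewrite $\rho_j^2(n_j)-\rho_{j+1}^2(n_{j+1}) = (1-\rho_{j+1}^2(n_{j+1})) - (1-\rho_j^2(n_j))$. Since $\rho_0=1$ kills the $j=0$ contribution of the negative-sign part, the negative sum remains non-negative and may be dropped, yielding
\[
S \leq \sum_{j=0}^{k-1} w_j(n_j)\bigl(1-\rho_{j+1}^2(n_{j+1})\bigr) + w_k(n_k).
\]
Peeling off the $j=k-1$ contribution from this sum, the remaining indices $j=0,\dots,k-2$ assemble exactly into $\kappa_{k-1}$ as defined in \eqref{eq:PK-1Kappa}, so that
\[
S \leq \kappa_{k-1} + w_{k-1}(n_{k-1})\bigl(1-\rho_k^2(n_k)\bigr) + w_k(n_k).
\]

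The final step is to apply Assumption~\ref{assump:acc_rates} and Assumption~\ref{assump:cost_rates} for index $j=k$ to bound $1-\rho_k^2(n_k) \leq c_{a,k}r_{a,k}(n_k)$ and $w_k(n_k) \leq c_{c,k}r_{c,k}(n_k)$, and to combine with the Cauchy--Schwarz factor $(k+1)$ and the rewritten denominator $p-\sum_{i=1}^k n_i = p_{k-1}-n_k$. I expect the main (and only non-routine) obstacle to be the second step: one must realize that the ``$-(1-\rho_j^2)$'' pieces can be discarded cleanly because $\rho_0=1$ makes the $j=0$ term vanish and leaves a sum of non-negative quantities; this is what makes the bound involve $\kappa_{k-1}$ with weights $w_j(n_j)(1-\rho_{j+1}^2(n_{j+1}))$ rather than differences $\rho_j^2-\rho_{j+1}^2$. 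Everything else is bookkeeping.
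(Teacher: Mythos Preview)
Your proposal is correct and follows essentially the same route as the paper: apply the sums-of-squares (Cauchy--Schwarz) inequality to linearize the square, bound each $\rho_j^2-\rho_{j+1}^2$ by $1-\rho_{j+1}^2$ (the paper just invokes $\rho_j^2\le 1$ directly, which is equivalent to your telescoping/drop-the-negative-part maneuver), isolate the $j=k-1$ and $j=k$ terms, and then apply Assumptions~\ref{assump:acc_rates}--\ref{assump:cost_rates} at index $k$ together with $p-\sum_{i=1}^k n_i = p_{k-1}-n_k$. No substantive difference.
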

\begin{proof}
With the sums of squares inequality,
\[
\left(\sum_{i = 0}^ka_i\right)^2 \leq (k + 1)\sum_{i = 0}^k a_i^2\,,\qquad a_0, \dots, a_k \in \mathbb{R}\,,
\]
we obtain
\begin{align*}
\mathrm{MSE}[\hat{E}^{\mathrm{CA-MFMC}}_{n_1, \dots, n_k}] = & \frac{\sigma_0^2}{p - \sum_{j=0}^{k} n_j}
\left(\sum_{j=0}^{k} \sqrt{w_j(n_j)\left(\rho_j^2(n_j) - \rho_{j + 1}^2(n_{j+1})\right)} \right)^2\\
\leq & \frac{\sigma_0^2(k + 1)}{p - \sum_{j=0}^{k} n_j}\sum_{j = 0}^k w_j(n_j)(\rho_j^2(n_j) - \rho_{j + 1}^2(n_{j+1}))\\
= & \frac{\sigma_0^2(k + 1)}{p - \sum_{j=0}^{k} n_j}\left(\sum_{j = 0}^{k-2} w_j(n_j)(\rho_j^2(n_j) - \rho_{j + 1}^2(n_{j+1})) + \right.\\
& \qquad \qquad \qquad \qquad \qquad  w_{k-1}(n_{k-1})(\rho_{k-1}^2(n_{k-1}) - \rho_k^2(n_k)) + w_k(n_k)\rho_k^2(n_k)\bigg)\,,
\end{align*}
where we used the convention that $\rho_{k + 1} = 0$ in the last equation. With $\rho_j^2 \leq 1$ for $j = 0, \dots, k$ and $p_{k - 1}, \kappa_{k - 1}$ defined in \eqref{eq:PK-1Kappa}, the bound \eqref{eq:camfmc_ND_mse_upper_bound_rates} follows with Assumptions~\ref{assump:acc_rates} and \ref{assump:cost_rates}.
\end{proof}

Bound \eqref{eq:camfmc_ND_mse_upper_bound_rates} in Lemma~\ref{lm:BoundMSEMulti} decomposes into the component $\kappa_{k - 1}$ which depends on $n_1, \dots, n_{k - 1}$ only and components $w_{k-1}(n_{k-1})(\rho_{k-1}^2(n_{k-1}) - \rho_k^2(n_k))$ and $w_k(n_k)\rho_k^2(n_k)$ that can be bounded with Assumptions~\ref{assump:acc_rates} and \ref{assump:cost_rates} so that they depend on $n_k$ for a fixed $n_{k-1}$.
This decomposition motivates a sequential approach of adding low-fidelity models to the CA-MFMC estimator. 
At iteration $j = 1, \dots, k$, we consider the objective
\begin{equation}\label{eq:CAMFMC:ObjJ}
u_j(n_j; n_1, \dots, n_{j-1}): [1, p_{j - 1} - 1] \to (0, \infty), \qquad n_j \mapsto \frac{1}{p_{j - 1} - n_j}\left(\kappa_{j - 1} + \hat{c}_{a,j}r_{a,j}(n_j) + c_{c,j}r_{c,j}(n_j)\right)\,,
\end{equation}
where $\hat{c}_{a,j} = w_{j-1}(n_{j-1})c_{a,j}$. 
For $j = 1$, we obtain the objective defined in \eqref{eq:CAMFMC_obj_1D} with the convention that $n_0 = 0$ and $w_0 = 1$. 
For $j > 1$, we obtain objectives $u_j$ in \eqref{eq:CAMFMC:ObjJ} that depend on $n_1, \dots, n_{j - 1}$. 
For given $n_1, \dots, n_{j-1}$, there is a global, unique minimizer of $u_j$ in $[1, p_{j - 1} - 1]$ as the following proposition shows.

\begin{proposition}\label{prop:MultiCAMFMC}
Let Assumptions~\ref{assump:acc_rates} and \ref{assump:cost_rates} hold. For $j = 1$, let Proposition~\ref{prop:obj_1D_convexity} apply and let $n_1^*$ be the corresponding unique global minimum. Iterate now over $j = 2, \dots, k$ and consider the objective $u_j(n_j; n_1^*, \dots, n_{j-1}^*)$ defined in \eqref{eq:CAMFMC:ObjJ} at iteration $j$. If it holds that 
\begin{equation} 
\hat{c}_{a,j}r_{a,j}^{\prime \prime}(n_j) + c_{c,j}r_{c,j}^{\prime \prime}(n_j) > 0\,,
\label{eq:ExtraStrictConvex}
\end{equation}
then there exists a unique minimizer $n_j^* \in [1, p_{j-1} - 1]$ of $u_j$. 
\end{proposition}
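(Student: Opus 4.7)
The plan is to proceed by induction on $j$, exploiting the fact that, once the minimizers $n_1^*,\dots,n_{j-1}^*$ obtained in previous iterations have been fixed, the objective $u_j(\cdot;n_1^*,\dots,n_{j-1}^*)$ in \eqref{eq:CAMFMC:ObjJ} has exactly the same structural form as the one-dimensional objective $u_1$ treated in Proposition~\ref{prop:obj_1D_convexity}, up to the $n_j$-independent additive constant $\kappa_{j-1}$ in the numerator and the identifications $p \leftrightarrow p_{j-1}$, $c_{a,1} \leftrightarrow \hat{c}_{a,j}$, and $c_{c,1} \leftrightarrow c_{c,j}$. The base case $j=1$ is exactly Proposition~\ref{prop:obj_1D_convexity} under the convention $n_0=0$, $w_0=1$, $\kappa_0=0$.

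For the inductive step at $j \geq 2$, I would fix $n_1^*,\dots,n_{j-1}^*$ and introduce the auxiliary function $h_j(n_j) = \kappa_{j-1} + \hat{c}_{a,j} r_{a,j}(n_j) + c_{c,j} r_{c,j}(n_j)$. The constant $\hat{c}_{a,j} = w_{j-1}(n_{j-1}^*) c_{a,j}$ is strictly positive because evaluation costs are positive and $c_{a,j}>0$, while $\kappa_{j-1}$ is independent of $n_j$ and therefore vanishes under differentiation. Assumption~\eqref{eq:ExtraStrictConvex} then yields $h_j^{\prime\prime}(n_j) = \hat{c}_{a,j} r_{a,j}^{\prime\prime}(n_j) + c_{c,j} r_{c,j}^{\prime\prime}(n_j) > 0$ on $[1,p_{j-1}-1]$, which is the exact analogue of \eqref{eq:StrictConvexH}. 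Writing $u_j(n_j) = h_j(n_j)/(p_{j-1}-n_j)$ and differentiating gives
\[
u_j^{\prime}(n_j) = \frac{(p_{j-1}-n_j) h_j^{\prime}(n_j) + h_j(n_j)}{(p_{j-1}-n_j)^2}\,,
\]
mirroring \eqref{eq:proof:u1Prime}.

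From this point the three-case analysis of Proposition~\ref{prop:obj_1D_convexity} transfers verbatim. If $u_j^{\prime}$ is of constant sign on $[1,p_{j-1}-1]$, then the unique minimizer is one of the two endpoints. Otherwise, at any interior stationary point $n_j^*$, the identity
\[
u_j^{\prime\prime}(n_j) = \frac{h_j^{\prime\prime}(n_j)}{p_{j-1}-n_j} + \frac{2}{p_{j-1}-n_j}\, u_j^{\prime}(n_j)
\]
together with $u_j^{\prime}(n_j^*)=0$ and strict convexity of $h_j$ yields $u_j^{\prime\prime}(n_j^*)>0$, so every stationary point is a strict local minimum; a standard connected-interval argument then rules out two distinct local minima and delivers uniqueness of $n_j^*$.

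The main obstacle is purely one of bookkeeping: one has to verify that $\hat{c}_{a,j}$ is strictly positive, that $\kappa_{j-1}$ does not contaminate the curvature of $h_j$, and implicitly that the nested budget $p_{j-1}$ remains strictly positive as the iteration proceeds (so that $[1,p_{j-1}-1]$ is a nondegenerate interval). None of these points requires a new analytical idea beyond those already used in Proposition~\ref{prop:obj_1D_convexity}, so once the structural correspondence is spelled out, the induction closes cleanly.
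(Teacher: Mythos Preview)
Your proposal is correct and follows essentially the same approach as the paper: the paper's proof simply observes that $n_j \mapsto \kappa_{j-1} + \hat{c}_{a,j}r_{a,j}(n_j) + c_{c,j}r_{c,j}(n_j)$ satisfies the analogue of \eqref{eq:ConvexityAuxiliaryAsm} on $[1,p_{j-1}-1]$ because $\kappa_{j-1}$ is constant in $n_j$ and \eqref{eq:ExtraStrictConvex} is assumed, and then defers to the argument of Proposition~\ref{prop:obj_1D_convexity}. Your write-up spells out the same reduction in more detail (including the explicit formulas for $u_j'$ and $u_j''$ and the three-case analysis), but the underlying idea is identical.
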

\begin{proof}
The function $n_j \mapsto \kappa_{j - 1} + \hat{c}_{a,j}r_{a,j}(n_j) + c_{c,j}r_{c,j}(n_j)$ satisfies \eqref{eq:ConvexityAuxiliaryAsm} in $[1, p_{j - 1}-1]$ because $\kappa_{j-1}$ is constant in $n_j$ and \eqref{eq:ExtraStrictConvex} is assumed. Thus, the same arguments as in the proof of Proposition~\ref{prop:obj_1D_convexity} apply and thus unique minimizers $n_2^*, \dots, n_k^*$ exist.
\end{proof}

The next proposition shows the minimizers of \eqref{eq:CAMFMC:ObjJ} for $j = 1, \dots, k$ are bounded from above, independent of the computational budget $p$.
\begin{proposition}\label{prop:BoundMulti}
Let Proposition~\ref{prop:MultiCAMFMC} apply, with the condition \eqref{eq:ExtraStrictConvex} holding in $(0, \infty)$. Furthermore, analog to Proposition~\ref{prop:obj_1D_n_star_boundness}, let there exist $\bar{n}_j \in (0, \infty)$ so that
\[
\hat{c}_{a,j}r_{a,j}^{\prime}(\bar{n}_j) + c_{c,j}r_{c,j}^{\prime}(\bar{n}_j) = 0
\]
holds for all $j = 1, \dots, k$. Then, for $j = 1, \dots, k$, there exist bounds $\bar{n}_1^*, \dots, \bar{n}_k^*$ that are independent of $p$ so that $n_j^* \leq \bar{n}_j^*$ for $j = 1, \dots, k$. 
\end{proposition}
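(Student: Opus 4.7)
The plan is to prove the statement by induction on $j$, leveraging the fact that the objective $u_j$ has the same structural form as $u_1$ in Proposition~\ref{prop:obj_1D_convexity}, up to the constant shift $\kappa_{j-1}$ and the replacement of $c_{a,1}$ by $\hat{c}_{a,j}$ and of $p$ by $p_{j-1}$. The base case $j = 1$ is exactly Proposition~\ref{prop:obj_1D_n_star_boundness}, giving $n_1^* \leq \bar{n}_1^* := \max\{1,\bar{n}_1\}$, which is independent of $p$ by construction.

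For the inductive step, assume that $n_i^* \leq \bar{n}_i^*$ with $\bar{n}_i^*$ independent of $p$ for $i = 1, \dots, j-1$. I would introduce $h_j(n_j) = \kappa_{j-1} + \hat{c}_{a,j}r_{a,j}(n_j) + c_{c,j}r_{c,j}(n_j)$ and observe that, since $\kappa_{j-1}$ is constant in $n_j$, its derivative reduces to $h_j'(n_j) = \hat{c}_{a,j}r_{a,j}'(n_j) + c_{c,j}r_{c,j}'(n_j)$. The hypothesis \eqref{eq:ExtraStrictConvex} on $(0,\infty)$ makes $h_j'$ strictly increasing, so the stationary point $\bar{n}_j$ from the proposition's assumption is unique. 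A short calculation (analogous to the one in the proof of Proposition~\ref{prop:obj_1D_convexity}) shows $u_j'(n_j) = \bigl((p_{j-1}-n_j)h_j'(n_j) + h_j(n_j)\bigr)/(p_{j-1}-n_j)^2$ and $u_j''(n_j) = h_j''(n_j)/(p_{j-1}-n_j) + 2 u_j'(n_j)/(p_{j-1}-n_j)$, so the same case split as in Proposition~\ref{prop:obj_1D_n_star_boundness}, applied with $p$ replaced by $p_{j-1}$, yields $n_j^* \leq \max\{1,\bar{n}_j\}$.

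The remaining point is that this upper bound itself must be $p$-independent, which reduces to showing that $\bar{n}_j$ does not depend on $p$. The constant $\hat{c}_{a,j} = w_{j-1}(n_{j-1}^*)\,c_{a,j}$ involves the already-chosen $n_{j-1}^*$; by the inductive hypothesis $n_{j-1}^* \leq \bar{n}_{j-1}^*$ with $\bar{n}_{j-1}^*$ independent of $p$, and by the monotonicity of $w_{j-1}$ (via Assumption~\ref{assump:cost_rates}) we have $w_{j-1}(n_{j-1}^*) \leq c_{c,j-1}r_{c,j-1}(\bar{n}_{j-1}^*)$, a $p$-independent bound. Hence $\hat{c}_{a,j}$, and therefore the solution $\bar{n}_j$ of the $p$-free equation $\hat{c}_{a,j}r_{a,j}'(\bar{n}_j) + c_{c,j}r_{c,j}'(\bar{n}_j) = 0$, is $p$-independent, so setting $\bar{n}_j^* := \max\{1,\bar{n}_j\}$ closes the induction.

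The main obstacle I anticipate is purely bookkeeping: making the sequential dependence $\hat{c}_{a,j} \to \bar{n}_j \to \bar{n}_j^* \to n_j^*$ rigorous so that no hidden $p$-dependence slips in through $\hat{c}_{a,j}$ when iterating $j \to j+1$. Once the monotonicity of $w_{j-1}$ is combined with the inductive bound on $n_{j-1}^*$, this chain closes cleanly, and the rest of the argument is a direct transcription of the one-dimensional case with $p$ replaced by $p_{j-1}$ and the harmless constant $\kappa_{j-1}$ added.
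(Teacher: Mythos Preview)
Your approach---induction on $j$ with Proposition~\ref{prop:obj_1D_n_star_boundness} applied at each step after recognizing that $u_j$ has the same structure as $u_1$---is exactly what the paper does; its proof is a single sentence invoking induction and Proposition~\ref{prop:obj_1D_n_star_boundness}. You in fact go further than the paper by explicitly flagging the one real subtlety: the coefficient $\hat{c}_{a,j} = w_{j-1}(n_{j-1}^*)\,c_{a,j}$ may carry a hidden $p$-dependence through $n_{j-1}^*$.

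Your resolution of this point, however, has a slip. From $n_{j-1}^* \leq \bar{n}_{j-1}^*$ and the monotonicity of the cost bound you correctly derive a $p$-independent \emph{upper bound} on $w_{j-1}(n_{j-1}^*)$, and hence on $\hat{c}_{a,j}$. But you then conclude ``Hence $\hat{c}_{a,j}$ \ldots\ is $p$-independent,'' which does not follow: a bounded quantity is not a constant one. Since $n_{j-1}^*$ is the minimizer of $u_{j-1}$ on $[1,p_{j-2}-1]$ and the first-order condition \eqref{eq:proof:u1Prime} involves $p_{j-2}$, the value $n_{j-1}^*$ generally moves with $p$; so does $\hat{c}_{a,j}$, and therefore so does the root $\bar{n}_j$ of $\hat{c}_{a,j}r_{a,j}'(\cdot) + c_{c,j}r_{c,j}'(\cdot) = 0$. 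Your proposed bound $\bar{n}_j^* := \max\{1,\bar{n}_j\}$ is then not $p$-independent as written.

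The repair is short and uses exactly the ingredients you already assembled. The zero of $c\,r_{a,j}'(\cdot) + c_{c,j}r_{c,j}'(\cdot)$ is monotone increasing in $c>0$: since $r_{a,j}'<0$, raising $c$ lowers $h_j'$ pointwise, and $h_j'$ is strictly increasing by \eqref{eq:ExtraStrictConvex}, so its zero shifts to the right. Replace $\hat{c}_{a,j}$ by its $p$-independent upper bound $C_j := c_{c,j-1}r_{c,j-1}(\bar{n}_{j-1}^*)\,c_{a,j}$, let $\tilde{n}_j$ be the corresponding zero, and set $\bar{n}_j^* := \max\{1,\tilde{n}_j\}$. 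Then $\bar{n}_j \leq \tilde{n}_j$ for every $p$, the case analysis of Proposition~\ref{prop:obj_1D_n_star_boundness} still yields $n_j^* \leq \max\{1,\bar{n}_j\} \leq \bar{n}_j^*$, and the induction closes with a genuinely $p$-free bound.
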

\begin{proof}
Proof via induction and applying Proposition~\ref{prop:obj_1D_n_star_boundness} sequentially to Proposition~\ref{prop:MultiCAMFMC}. 
\end{proof}

Propositions~\ref{prop:obj_1D_convexity}--\ref{prop:BoundMulti} highlights that even though using more than $n_{\ell}^*$ high-fidelity model evaluations as training data to construct the $\ell$th low-fidelity model can lead to a more accurate low-fidelity model in terms of correlation coefficient, it also increases its evaluation costs, which ultimately leads to an increase of the upper bound of the MSE of the MFMC estimator \eqref{eq:camfmc_ND_mse_upper_bound_rates} for a fixed budget and thus a poorer estimator.
This shows that it is beneficial in multi-fidelity methods to trade off accuracy and evaluation costs of the low-fidelity models.

\begin{rmk} \label{rmk:model_swap}
Increasing the model hierarchy by adding the $j$th context-aware low-fidelity model $f^{(j)}_{n_j}$ must ensure that the MFMC ordering \eqref{eq:MFMC_ordering} is satisfied, i.e., the accuracy and evaluation cost rates corresponding to $f^{(j)}_{n_j}$, evaluated at $n_j^*$, have to satisfy \eqref{eq:MFMC_ordering}. 
If \eqref{eq:MFMC_ordering} is not satisfied, the order of the low-fidelity models in the multi-fidelity hierarchy must be changed accordingly.
\end{rmk}
\section{Numerical experiments and discussion} \label{sec:results}
We now present numerical results.
In Section~\ref{subsec:test_case_1}, we consider a heat conduction problem given by a parametric elliptic partial differential equation (PDE) with nine uncertain parameters, defined on a two-dimensional spatial domain.
Our goal in this experiment is to draw initial insights about the proposed context-aware learning algorithm. 
To this end, we will employ two heterogeneous low-fidelity models: an accurate low-fidelity model that is computationally more expensive than a second, less accurate, low-fidelity model.
Section~\ref{subsec:test_case_2} considers plasma micro-turbulence simulations that depend on $12$ uncertain inputs.
In this example, we consider three low-fidelity models: a coarse-grid model and two non-intrusive data-driven low-fidelity models based on sparse grid approximations and deep neural network regression. 

\subsection{Heat conduction in a two-dimensional spatial domain} \label{subsec:test_case_1}
All numerical experiments in this section were performed using an eight core Intel i7-10510U CPU and 16 GB of RAM.

\subsubsection{Setup}
The thermal block problem \cite{RozzaPateraSurvey} is defined on a two-dimensional spatial domain $\Omega = (0, 1)^2 = \bigcup_{i = 1}^{d} \Omega_i$, divided into $d = B_1 \times B_2$ non-overlapping vertical and horizontal square blocks, $\Omega_i$ with $i = 1, \ldots d$.
The mathematical model is given by the parametric elliptic PDE
\begin{equation} \label{eq:TB}
\begin{split}
- \operatorname{div}  k(x, y, \boldsymbol{\theta}) \nabla u(x, y) = 0 & \text{ in } \Omega\,, \\
u(x, y) = 0 & \text{ on } \Gamma_D\,, \\
k(x, y, \boldsymbol{\theta}) \nabla u(x, y) \cdot \boldsymbol{n} = j & \text{ on } \Gamma_{N, j}, \quad j = 0, 1,
\end{split}
\end{equation}
where $\Gamma_D$ is a Dirichlet boundary, $\Gamma_{N, 0}$ and $\Gamma_{N, 1}$ are Neumann boundaries, and
\begin{equation*}
k(x, y, \boldsymbol{\theta})  = \sum_{i = 1}^{d} \theta_i \chi_{\Omega_i}(x, y)
\end{equation*}  
is the piece-wise constant heat conductivity field parametrized by $\boldsymbol{\theta} = [\theta_1, \theta_2, \ldots, \theta_{d}] \subset \mathbb{R}^d$, where $\chi_{\Omega_i}$ is the indicator function of $\Omega_i$. We set $B_1 = B_2 = 3$ and thus $d = 9$. 
Here, $k(x, y, \boldsymbol{\theta})$ is parametrized by $d = 9$ uniformly distributed random parameters in $[1, 10]^9$.
The output of interest is the mean heat flow at the Neumann boundary $\Gamma_{N, 1}$ given by
\begin{equation} \label{eq:TB_ooi}
\mathbb{E}[u(\boldsymbol{\theta})] = \int_{\Gamma_{N, 1}} u(x, y, \boldsymbol{\theta}) \mathrm{d} x \mathrm{d} y.
\end{equation}
This setup is a slight modification of the problem considered by Patera and Rozza in \cite{RozzaPateraSurvey}.

The high-fidelity model is a finite element discretization of \eqref{eq:TB} consisting of $7,200$ triangular elements with mesh width $h = \sqrt{2}/60$, provided by the \textsc{RBMatlab}\footnote{\texttt{https://www.morepas.org/software/rbmatlab/}}.
The high-fidelity model evaluation cost is $0.1150$ seconds.
Moreover, its variance, estimated using $100$ MC samples, is $\sigma_0^2 \approx 0.0018$.  

\subsubsection{A low-fidelity model based on the reduced basis method}
The reduced-basis (RB) low-fidelity model $f^{(1)}_{n_1}$ is constructed using a greedy strategy, as described, for example, in \cite{Bi11}. 
We employ the implementation provided by \textsc{RBMatlab}.
It has been shown that greedy RB low-fidelity models have exponential accuracy rates for problems similar to the thermal block \cite{Bi11}, which is what we use when fitting the error decay.
Once the reduced basis is found in the offline stage, evaluating the low-fidelity model online entails solving a dense linear system of size equal to the number of reduced bases to find the reduced basis coefficients. 
Therefore, we model the evaluation cost rate as algebraic in the reduced-model dimension.

The constants in the rate functions are estimated via regression from pilot runs.
To estimate the constants in the exponential accuracy rate, we use $100$ high- and low-fidelity evaluations .
For estimating the constants in the evaluation cost rate, we average the runtimes of the low-fidelity model constructed using increasing values of $n_1$, evaluated at $1,000$ MC samples.
We perform these runtime measurements $50$ times and average the results.
The estimated rates are visualized in Figure~\ref{fig:TB_RB_rates} and the constants are shown in Table~\ref{tab:TB_RB_rate_constants}.
\begin{figure}[htbp]
\centering
\includegraphics[width=0.8\textwidth]{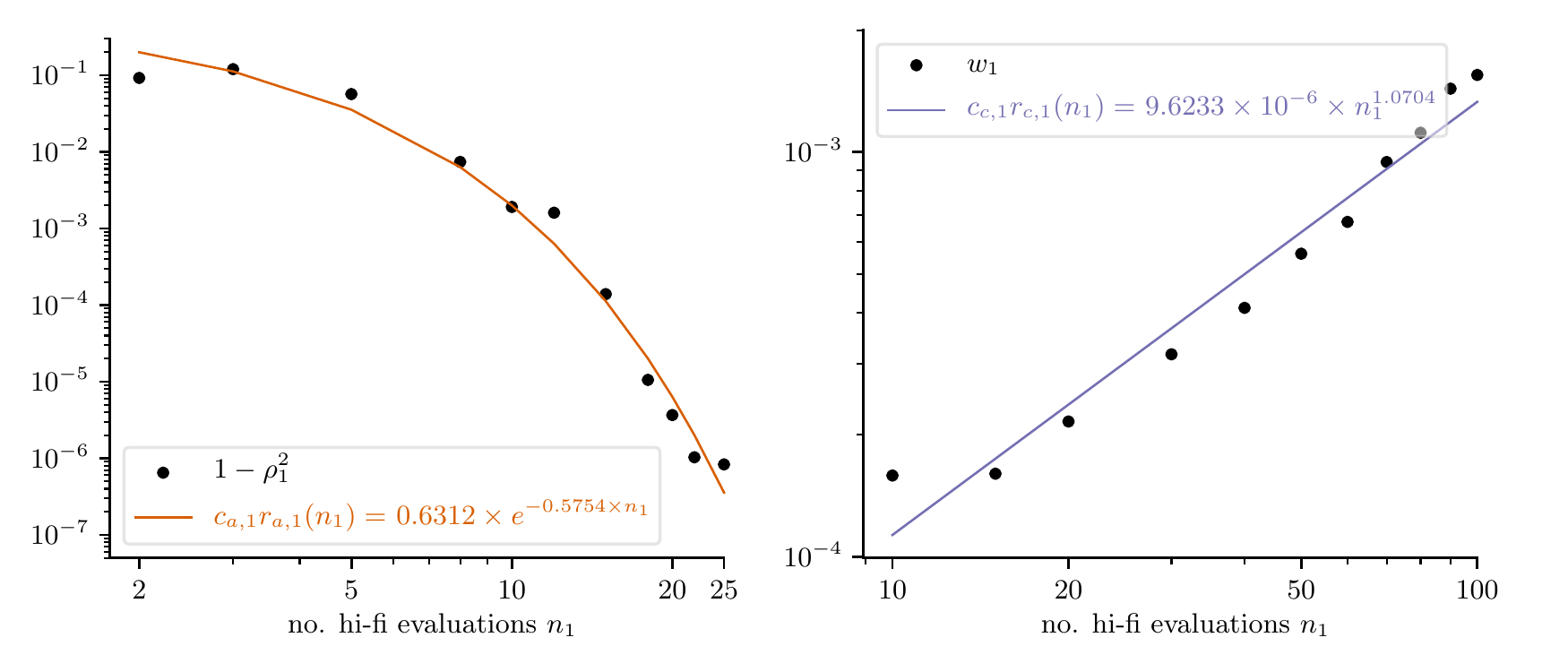}
\caption{Thermal block: Estimated accuracy (left) and evaluation cost rates (right) for the RB low-fidelity model.}
\label{fig:TB_RB_rates}
\end{figure}
\begin{table}[htbp]
\centering
\begin{tabular}{cc}
\begin{tabular}{|c|c|c|}
    \hline
    rate & rate type & estimated rate constants\bigstrut[t] \\
    \hline
    accuracy: $c_{a,1}r_{a,1}(n_1)$ & exponential: $r_{a,1}(n_1) = e^{-\alpha_1 n_1}$ & $c_{a,1} = 0.6312$, $\alpha_1 = 0.5754$ \bigstrut[t]  \\
    \hline
    evaluation cost: $c_{c,1}r_{c,1}(n_1)$ & algebraic: $r_{c,1}(n_1) = n_1^{\beta_1}$ & $c_{c,1} = 9.6233 \times 10^{-5}$, $\beta_1 = 1.0704$ \bigstrut[t]\\
    \hline
\end{tabular} &
\end{tabular}
\caption{Thermal block: Estimated accuracy and evaluation cost rates and their constants for the RB low-fidelity model.}
\label{tab:TB_RB_rate_constants}
\end{table}

\subsubsection{A data-fit low-fidelity model}
The second low-fidelity model  $f^{(2)}_{n_2}$ that we consider in the thermal block example is an $\varepsilon$-support vector regression ($\varepsilon$-SVR) model.
Our numerical implementation is based on \textsc{libsvm} \cite{CLin11}.
The training data consists of pairs of pseudo-random realizations of the nine-dimensional input and the corresponding high-fidelity model evaluations.
In our experiments, we used $\varepsilon = 10^{-2}$.
We model the accuracy and evaluation cost rates as algebraic.
To estimate the constants in the accuracy rate, we use $100$ high- and low-fidelity evaluations.
The constants in the evaluation cost rate are estimated by averaging the runtimes of the low-fidelity model evaluated at $1,000$ MC samples.
We perform these runtime measurements $50$ times and average the results.
The estimated rates are visualized in Figure~\ref{fig:TB_SVR_rates} and the constants are shown in Table~\ref{tab:TB_SVR_rate_constants}. 
Note that the $\varepsilon$-SVR low-fidelity model is less accurate but cheaper to evaluate than the RB low-fidelity model.
\begin{figure}[htbp]
\centering
\includegraphics[width=0.8\textwidth]{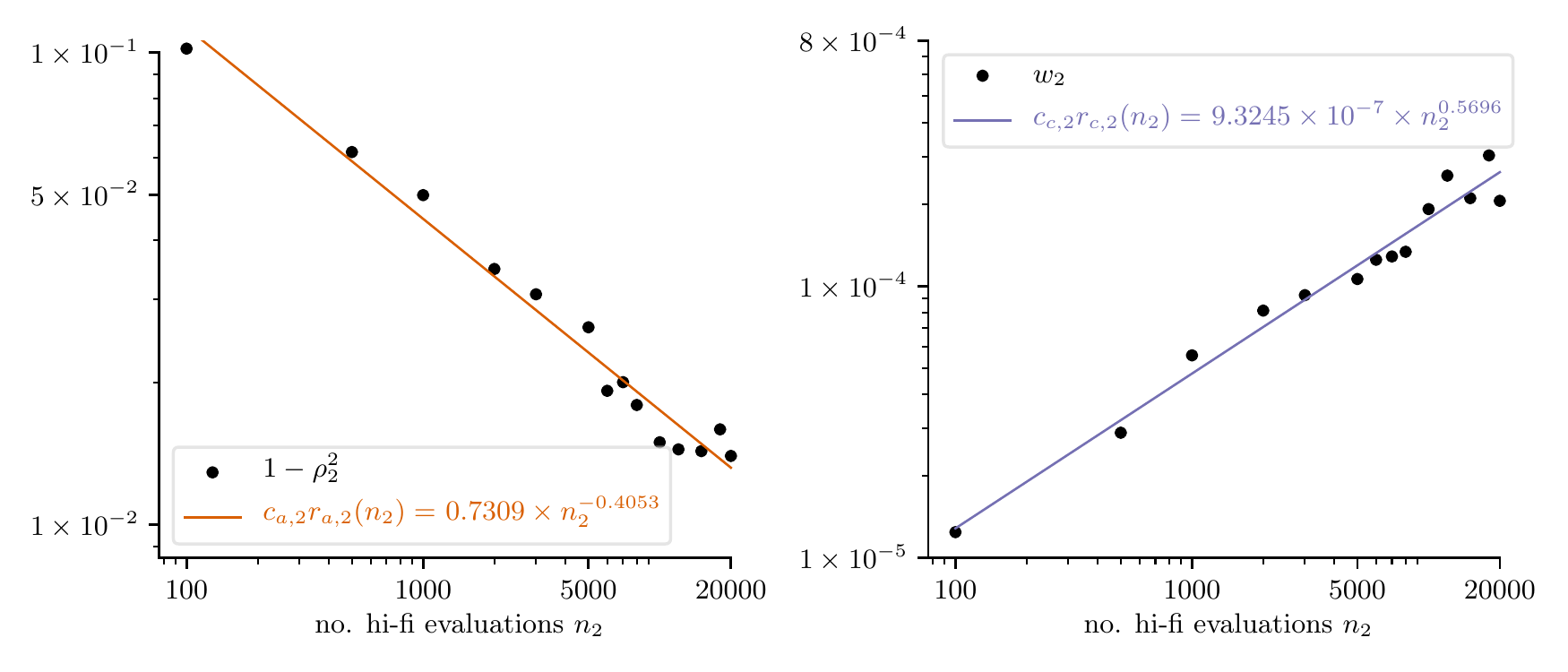}
\caption{Thermal block: Estimated accuracy (left) and evaluation cost rates (right) for the $\varepsilon$-SVR low-fidelity model.}
\label{fig:TB_SVR_rates}
\end{figure}
\begin{table}[htbp]
\centering
\begin{tabular}{cc}
\begin{tabular}{|c|c|c|}
    \hline
    rate & rate type & estimated rate constants\bigstrut[t] \\
    \hline
    accuracy: $c_{a,2}r_{a,2}(n_2)$ & algebraic: $r_{a,2}(n_2) = n_2^{-\alpha_2}$ & $c_{a,2} = 0.7309$, $\alpha_2 = 0.4053$  \bigstrut[t]  \\
    \hline
    evaluation cost: $c_{c,2}r_{c,2}(n_2)$ & algebraic: $r_{c,2}(n_2) = n_2^{\beta_2}$ & $c_{c,2} = 9.3245 \times 10^{-7}$, $\beta_2 = 0.5696$ \bigstrut[t]\\
    \hline
\end{tabular} &
\end{tabular}
\caption{Thermal block: Accuracy and evaluation cost rates and their constants for the $\varepsilon$-SVR low-fidelity model.}
\label{tab:TB_SVR_rate_constants}
\end{table}

\subsubsection{Context-aware multi-fidelity Monte Carlo with RB low-fidelity model} \label{subsubsec:TB_one_lo_fi_model}
We first consider only the RB low-fidelity model and budgets $p \in \{5, 10, 30, 50, 80, 100, 300, 500 \}$ seconds.
We compare, in terms of MSE, our context-aware estimator with standard MC sampling and MFMC in which the RB low-fidelity model is constructed using a fixed, a priori chosen number of basis independent of the budget $p$. 
We compute the MSEs from $N$ replicates as
\begin{equation}\label{eq:mse_replicates}
e_{\text{MSE}}(\hat{E}^{(\cdot)}) = \frac{1}{N} \sum_{n=1}^{N} \left(\hat{\mu}_{\text{ref}} - \hat{E}^{(\cdot)}\right)^2,
\end{equation}
where $\hat{\mu}_{\text{ref}}$ represents the reference mean estimator and $\hat{E}^{(\cdot)}$ is either the CA-MFMC, standard MFMC or the MC estimator.
The reference result $\hat{\mu}_{\text{ref}}$ was obtained using the context-aware multi-fidelity estimator in which the two low-fidelity models, RB and $\varepsilon$-SVR, were sequentially added with a computational budget $p_{\text{ref}} = 10^3$ seconds (cf.~Section~\ref{subsubsec:TB_sequentially_adding_second_lo_fi_model}).
To distinguish between the employed estimators, we use ``\emph{std.~MC: } $f^{(0)}$'' to denote the standard MC estimator, ``\emph{MFMC: } $f^{(0)}, f^{(1)}$'' to refer to the MFMC estimator in which the RB low-fidelity model is statically constructed independent of the budget $p$, and ``\emph{CA-MFMC: } $f^{(0)}, f^{(1)}_{n_1}$'' to refer to the context-aware multi-fidelity estimator with the RB low-fidelity model.

Based on the estimated rate parameters in Table~\ref{tab:TB_RB_rate_constants}, both the accuracy and cost rates of the RB low-fidelity model are strictly convex for all $n_1 \in [1, p  - 1]$ and any budget $p > 0$, and hence Propositions~\ref{prop:obj_1D_convexity} and \ref{prop:obj_1D_n_star_boundness} apply. 
Therefore the optimal, context-aware number of high-fidelity model evaluations for constructing the RB low-fidelity model, $n_1^*$, exists, is unique and bounded with respect to the budget. 
The optimal number of training samples $n_1^*$ for increasing budgets is shown in Figure~\ref{fig:TB_RB_n_star_offline_online}, on the left.
Notice that $n_1^* \leq 18$ and this upper bound is attained for budgets $p \geq 10$ seconds, which is because the RB low-fidelity models is accurate already with few basis vectors.
Figure~\ref{fig:TB_RB_n_star_offline_online}, right, depicts the split of the total computational budget between constructing the RB low-fidelity model (offline phase, using $n_1^*$ high-fidelity model evaluations) and multi-fidelity sampling (online phase, using the remaining budget). 
Because $n_1^*$ is bounded, the percentage of the budget allocated to constructing the RB low-fidelity model decreases with $p$. 
\begin{figure}[bt]
\centering
\includegraphics[width=1.0\textwidth]{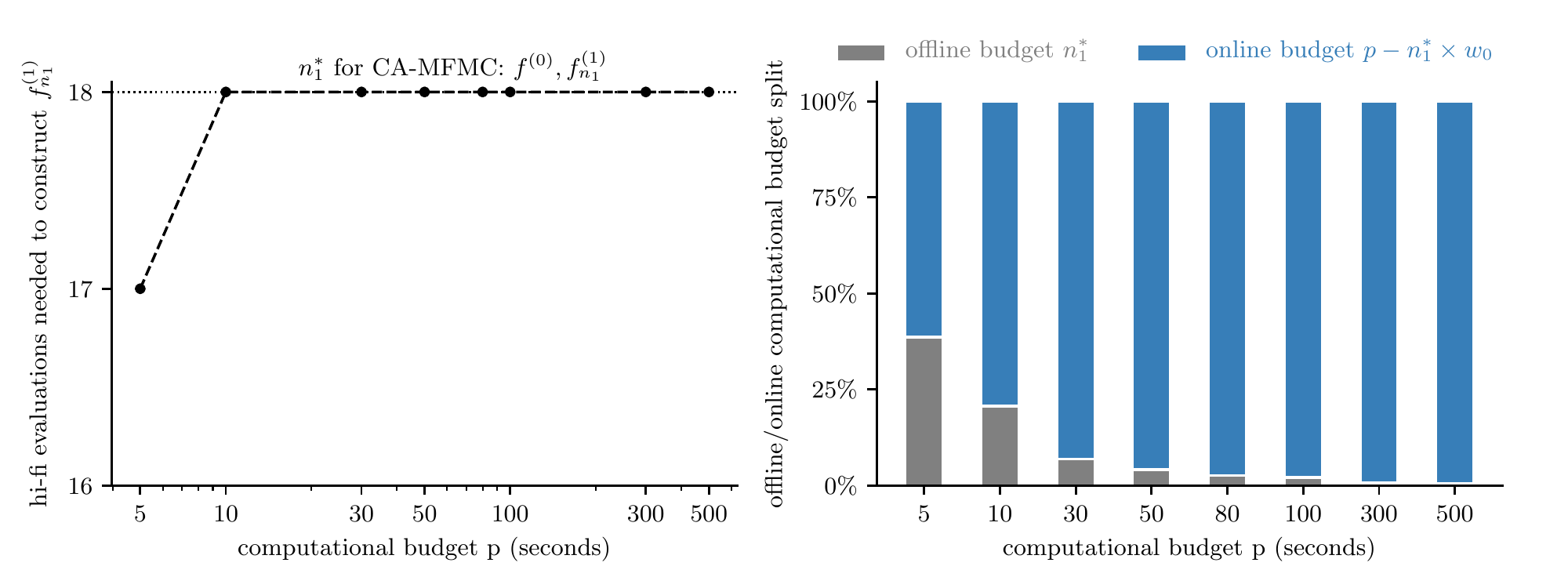}
\caption{Thermal block: Left, optimal number of high-fidelity training samples for training the RB low-fidelity model for the context-aware estimator. Right, visualization of how the context-aware estimator splits the computational budget between training (offline) and sampling (online).}
\label{fig:TB_RB_n_star_offline_online}
\end{figure}

Figure~\ref{fig:TB_RB_MSE} compares the MSEs of regular MC, static MFMC with RB low-fidelity models of fixed dimension $2, 8, 50$, and CA-MFMC estimators.
The left plot reports the analytical MSEs derived using equations \eqref{eq:mse_mfmc}, \eqref{eq:mse_mc} and \eqref{eq:mse_camfmc}, which can be evaluated with the constants reported in Tables~\ref{tab:TB_RB_rate_constants} and \ref{tab:TB_RB_MFMC}. 
\begin{table}[htbp]
\centering
\begin{tabular}{cc}
\begin{tabular}{|c|c|c|c|}
    \hline
    $n_1$ & $\rho_1$ & $w_1$ & $\sigma_1^2$\bigstrut[t] \\
    \hline
    $2$ & $0.9277$ & $1.0915 \times 10^{-4}$ & $9.4347 \times 10^{-4}$ \bigstrut[t]  \\
    \hline
    $8$ & $0.9939$ & $1.9791 \times 10^{-4}$ & $0.0015$ \bigstrut[t]\\
    \hline
    $50$ & $0.9999$ & $6.7539 \times 10^{-4}$ & $0.0018$ \bigstrut[t]\\
    \hline
\end{tabular} &
\end{tabular}
\caption{Thermal block: Correlations, evaluation costs and variances of the RB low-fidelity model $f^{(1)}$ with dimensions $2, 8, 50$, respectively.}
\label{tab:TB_RB_MFMC}
\end{table}
Computing the analytical MSEs requires no numerical simulations of the forward model as long as the constants in Tables~\ref{tab:TB_RB_rate_constants} and \ref{tab:TB_RB_MFMC} are available.
The right plot shows the MSEs obtained numerically by taking $N = 50$ replicates of the estimators.
All multi-fidelity estimators give a lower MSE than standard MC sampling by at least half an order of magnitude, even when using only two dimensions in the static RB low-fidelity model. 
Overall, the proposed context-aware multi-fidelity estimator gives the lowest MSE of all employed estimators. 
It is about four orders of magnitude more accurate than standard MC sampling in terms of MSE. 
Our context-aware estimator is also more accurate than the MFMC estimators with static models because $2$ and $8$ are too few basis vectors and $50$ are too many basis vectors that make the low-fidelity model unnecessarily expensive to evaluate. 
In contrast, our context-aware estimator optimally trades off training and sampling and so achieves the lowest MSE of all estimators, which agrees with Proposition~\ref{prop:obj_1D_convexity} and Proposition~\ref{prop:obj_1D_n_star_boundness}. 
Even though the static MFMC estimator with $n_1 = 50$ basis vectors is close to the CA-MFMC estimator, it is unclear how to choose the dimension in static MFMC a priori and if, e.g., $50$ is a good choice, whereas the proposed context-aware approach provides the optimal $n_1^*$ with respect to an upper bound of the MSE for a given budget $p$; cf.~Section~\ref{subsubsec:camfmc_one_model_setup} and Figure~\ref{fig:TB_RB_n_star_offline_online}, left.
\begin{figure}[bt]
  \centering
  \includegraphics[width=1.0\textwidth]{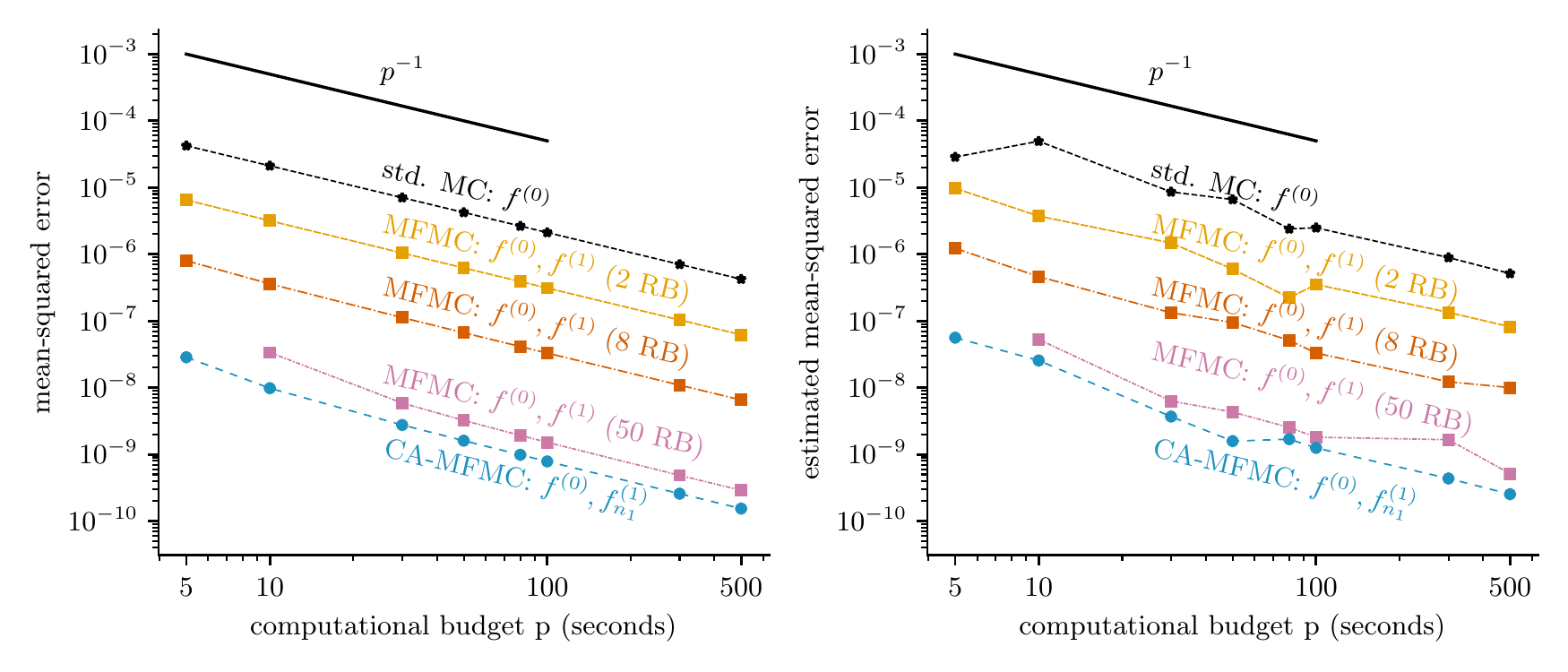}
  \caption{Thermal block: Analytical MSE is shown left and estimated MSE is shown right. The context-aware MFMC estimator CA-MFMC achieves the lowest MSE by trading off the costs of training the low-fidelity model and sampling. Even though we see that the static MFMC with an RB model of dimension $50$ performs well here, it is unclear before the error has been computed what dimension of the RB low-fidelity model is a good choice. Furthermore, the static estimator with RB model of dimension $50$ requires at least $50$ high-fidelity model evaluations to start with, whereas the CA-MFMC adaptively chooses the number of high-fidelity model evaluations $n_1^*$ for training the low-fidelity model based on the available budget.}
  \label{fig:TB_RB_MSE}
\end{figure}

\subsubsection{Context-aware multi-fidelity Monte Carlo with RB and data-fit low-fidelity models} \label{subsubsec:TB_sequentially_adding_second_lo_fi_model}
We now sequentially add, as detailed in Section~\ref{subsubsec:camfmc_multiple_models}, the second low-fidelity model that is based on $\varepsilon$-SVR.
We consider budgets of $p \in \{5, 10, 30, 50\}$ seconds. 
The abbreviation ``\emph{CA-MFMC: } $f^{(0)}, f^{(1)}_{n_1}, f^{(2)}_{n_2}$'' refers to the CA-MFMC estimator in which the $\varepsilon$-SVR low-fidelity model $f^{(2)}_{n_2}$ is sequentially added after the RB low-fidelity model $f^{(1)}_{n_1}$.

As it can be seen from Table~\ref{tab:TB_SVR_rate_constants}, the algebraic cost rate of the $\varepsilon$-SVR low-fidelity model is concave since $0 < \beta_2 < 1$, which implies that we need to verify whether the assumptions in Propositions~\ref{prop:MultiCAMFMC} 
hold true.
We therefore verify whether the function $h_2(n_2) = \kappa_{1} + \hat{c}_{a,2}r_{a,2}(n_2) + c_{c,2}r_{c,2}(n_2)$ defined in \eqref{eq:PK-1Kappa}, with $\kappa_{1} = c_{c,1} r_{c, 1}(n_1^*)$ and $\hat{c}_{a,3} = \kappa_2 c_{a, 3}$ with $\kappa_2 = c_{a,1} r_{a, 1}(n_1^*)$, satisfies condition \eqref{eq:ExtraStrictConvex}.
Since $\kappa_{1}$ and $\kappa_{2}$ are constants for a fixed $n_1^*$, $h_2(n_2)$ is independent of the budget and it is hence sufficient to verify \eqref{eq:ExtraStrictConvex} for the largest considered budget, $p = 50$ seconds.
For this budget, $n_1^* = 18$ (cf.~Figure~\ref{fig:TB_RB_n_star_offline_online}, left), and $\kappa_{1} = 2.0061 \times 10^{-5}$ and $\kappa_2 = 2.1227 \times 10^{-4}$.
Moreover, $n_2 \in [1, \lfloor p/w0 \rfloor - n_1^* - 1] = [1, 415]$.
The left plot in Figure~\ref{fig:TB_u_h_2_SVR} shows that $h_2^{\prime \prime}(n_2) > 0$ for all $n_2 \in [1, 415]$ which implies that \eqref{eq:ExtraStrictConvex} is satisfied for all budgets $0 < p \leq 50$.
Therefore the number of high-fidelity model evaluations used to construct the $\varepsilon$-SVR low-fidelity model $n_2^*$ exists and is unique 
with respect to the budget $p - n_1^* \times w_0$ that is left after adding the context-aware RB low-fidelity model.
The right plot in Figure~\ref{fig:TB_u_h_2_SVR} depicts the objective $u_2(n_2)$ defined in \eqref{eq:CAMFMC:ObjJ}. 
\begin{figure}[htbp]
\centering
\includegraphics[width=1.0\textwidth]{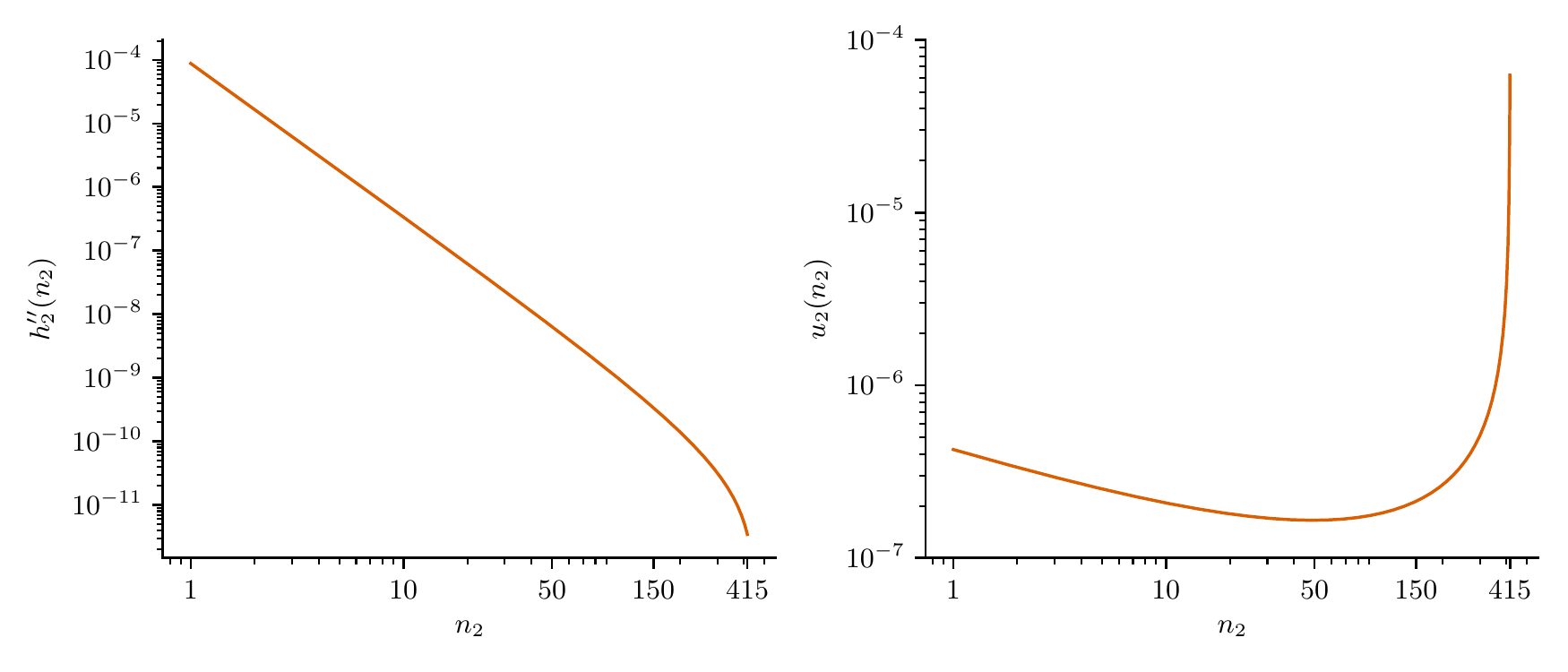}
\caption{Thermal block: The plot on the left demonstrates that 
\eqref{eq:ExtraStrictConvex} holds for budget $p = 50$ seconds, which corresponds to $n_2 \leq 415$. 
The plot on the right shows the objective $u_2$ defined in \eqref{eq:CAMFMC:ObjJ}, which has a unique, global minimizer in $[1, 415]$.} 
\label{fig:TB_u_h_2_SVR}
\end{figure}

Figure~\ref{fig:TB_RB_SVR_MSE} compares the MSE of the estimators. 
In the left plot, we show the analytical MSEs computed using equations \eqref{eq:mse_mfmc}, \eqref{eq:mse_mc} and \eqref{eq:mse_camfmc} and the constants reported in Tables~\ref{tab:TB_RB_rate_constants}, \ref{tab:TB_SVR_rate_constants} and \ref{tab:TB_RB_MFMC}, without requiring any new numerical simulations.
The MSEs reported in the right plot were computed using $N = 50$ replicates.
Sequentially adding the $\varepsilon$-SVR low-fidelity model to the context-aware estimator further decreases the MSE.
This shows that even though the slowly decreasing accuracy rate of the $\varepsilon$-SVR low-fidelity model indicates that it would necessitate a large training set to be accurate in single-fidelity settings for predicting high-fidelity model outputs, training it using at most $n_2^* = 50$ high-fidelity training samples in our context-aware learning approach is sufficient to achieve variance reduction and thus a more accurate multi-fidelity estimator.
This indicates that even low-fidelity models with poor predictive capabilities can be useful for multi-fidelity methods as long as they capture the trend of the high-fidelity model. 
\begin{figure}[htbp]
\centering
  \includegraphics[width=1.0\textwidth]{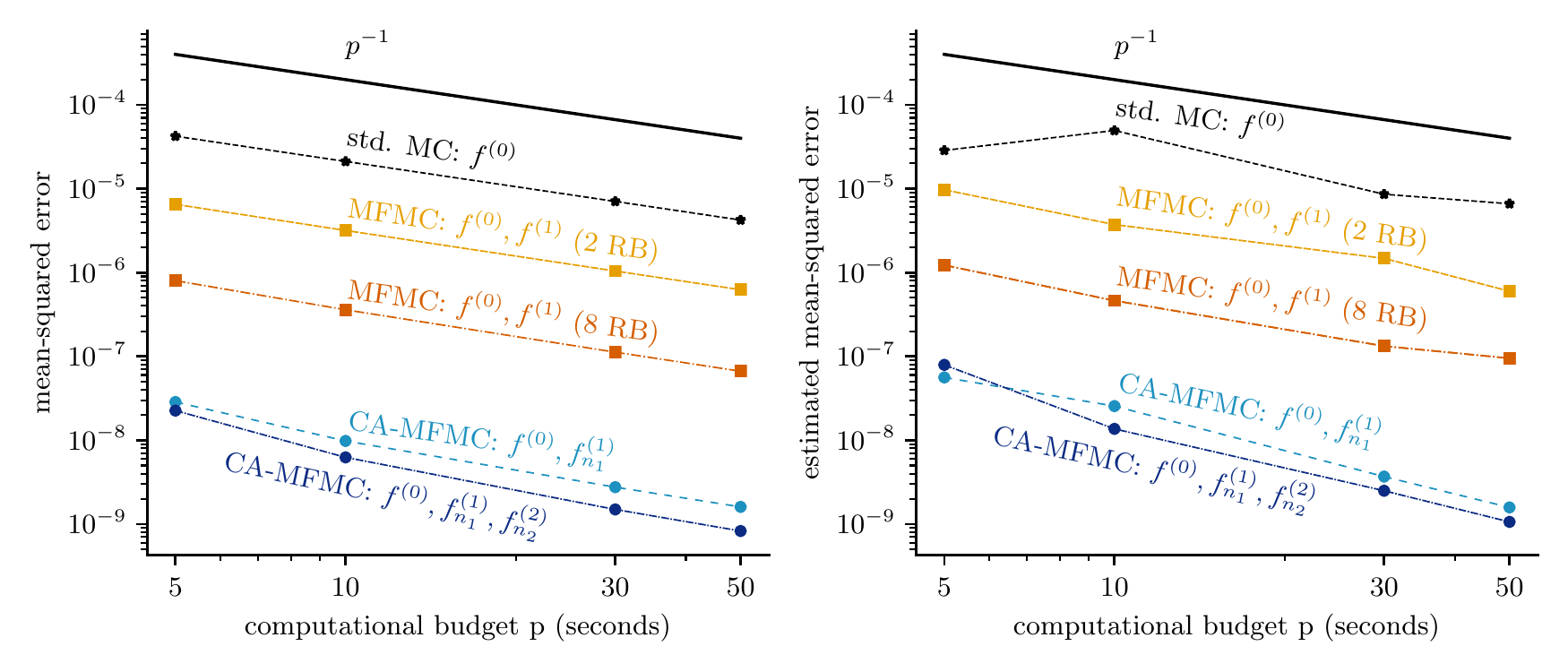}
  \caption{Thermal block: The left plot shows the analytic MSE and the right plot showed the estimated MSE. When adding the second model $f_{n_2}^{(2)}$, the MSE of the CA-MFMC estimator is further reduced compared to the CA-MFMC estimator with only one low-fidelity model.}
\label{fig:TB_RB_SVR_MSE}
\end{figure}

\subsection{Plasma micro-turbulence simulation in the ASDEX Upgrade Experiment} \label{subsec:test_case_2}

We now consider uncertainty quantification in simulations of plasma micro-turbulence in magnetic confinement devices. 
Of interest are small-scale fluctuations which cause energy loss rates despite sophisticated plasma confinements via strong and shaped magnetic fields.
The micro-turbulence is driven by the free energy provided by the steep plasma temperature and density gradients. 
However, the measurements of these gradients, as well as further decisive physics parameters affecting the underlying micro-instabilities are subject to uncertainties, which need to be quantified. 

\subsubsection{Setup of the experiment}
We focus on linear (in the phase space variables) gyrokinetic simulations (five-dimensional phase space characterized by three positions and two velocities), which are used to characterize the underlying micro-instabilities; we refer to \cite{BH07} and the references therein for details.
A parameter set from a discharge from the ASDEX Upgrade Experiment is considered, which is similar to the parameter set used in \cite{Fr18} for a validation study of gyrokinetic simulations.
The experiment considered here is characterized by two particle species, (deuterium) ions and electrons.
Moreover, the magnetic geometry is described by an analytical Miller equilibrium \cite{Miller_PoP98}.
We consider both electromagnetic effects and collisions.
Collisions are modeled by a linearized Landau-Boltzmann operator. 
The total number of uncertain inputs is $12$, which are modeled as given in Table~\ref{tab:aug_12D}. 
\begin{table}[bt]
\centering
\begin{tabular}{|c|c|c|cc|}
\hline
$\boldsymbol{\theta}$ & parameter name & symbol & left bound  & right bound \\ 
\hline
$\theta_1$ & plasma $\beta$ & $\beta$ & $0.4889 \times 10^{-3}$ & $0.5975 \times 10^{-3}$ \\
$\theta_2$ & collision frequency & $\nu_c$ & $0.6412 \times 10 ^{-2}$ & $0.8676 \times 10^{-2}$ \\
$\theta_3$ & ion and electron density gradient & $\omega_n$ & $1.1563$ & $1.9272$ \\
$\theta_4$ & ion temperature gradient & $\omega_{T_i}$ & $2.0966$ & $3.4943$ \\
$\theta_5$ & temperature ratio & $T_i/T_e$ & $0.6142$ &  $0.6788$ \\
$\theta_6$ & electron temperature gradient & $\omega_{T_e}$ & $4.0403$ & $6.7338$ \\
$\theta_7$ & effective ion charge & $Z_{\mathrm{eff}}$ & $1.2800$ & $1.9200$ \\
$\theta_8$ & safety factor & $q$ & $2.1708$ & $2.3993$ \\
$\theta_9$ & magnetic shear & $\hat{s}$ & $1.9927$ & $2.4356$ \\
$\theta_{10}$ & elongation & $k$ & $1.2815$ & $1.4165$ \\
$\theta_{11}$ & elongation gradient & $s_k$ & $0.2081$ & $0.2544$ \\
$\theta_{12}$ & triangularity & $\delta$ & $0.0528$ & $0.0583$ \\
\hline
\end{tabular}
\caption{ASDEX Upgrade Experiment: The $12$ uniformly distributed uncertain inputs.
\label{tab:aug_12D}}
\end{table}
    
We perform experiments at one bi-normal wave-number $k_y \rho_s = 0.8$.
The output of interest is the growth rate (spectral abscissa) of the dominant micro-instability mode, which corresponds to the maximum real part over the spectrum.
For this parameter set, the dominant micro-instability mode at $k_y \rho_s = 0.8$ and at the nominal values of the input parameters is electron temperature gradient-driven, characterized by a negative frequency (imaginary part), while the first subdominant mode is ion temperature gradient-driven, characterized by a positive frequency. 
However, for certain values of the ion temperature and density gradients within the bounds showed in Table~\ref{tab:aug_12D}, the first subdominant ion mode can transition to become the dominant mode.
This mode transition can be sharp and in turn lead to sharp transitions or even discontinuities in the growth rate.
To avoid the potentially detrimental effects of this discontinuity, in our experiments, we determine the growth rate of the underlying electron-driven micro-instability mode.
To this end, we compute the first two micro-instability eigenmodes and then select the growth rate that has a corresponding negative frequency.

\subsubsection{High-fidelity model}
The high-fidelity model is provided by the gyrokinetic simulation code \textsc{Gene} \cite{Je00} in the flux-tube limit \cite{Dannert_2005}. 
This assumes periodic boundary conditions over the radial $x$ and bi-normal $ky$ directions.
To discretize the five dimensional gyrokinetic phase-space domain, we use $15$ Fourier modes in the radial direction and $24$ points along the field line, in the $z$ direction.
The binormal coordinate $ky$ is decoupled in linear flux-tube simulations and hence only one such mode is used here.
In velocity space, we employ $48$ equidistant and symmetric parallel velocity grid points and $16$ Gauss-Laguerre distributed magnetic moment points. 
This results in a total of $276,480 = 15 \times 1 \times 24 \times 48 \times 16$ degrees of freedom in the $5$D phase space.

The average high-fidelity model runtime is $410.9941$ seconds on $32$ cores on one node on the Lonestar6 supercomputer at the Texas Advanced Computing Center; cf.~Section~\ref{sec:Intro}. One such node comprises two AMD EPYC 7763 64-Core Processors for a total of $128$ cores per socket, and $256$ GB of RAM.
The high-fidelity variance, estimated using $20$ MC samples, is $\sigma_0^2 \approx 0.0279$.

\subsubsection{A coarse-grid low-fidelity model}
We first consider a low-fidelity model $f^{(1)}$ obtained by coarsening the grid in $z$ direction (20 points), $v_{||}$ direction ($20$ points) and $\mu$ direction ($8$ Gauss-Laguerre points), for a total of $48,000 = 15 \times 20 \times 20 \times 8$ degrees of freedom, which corresponds to $5.76\times$ fewer degrees of freedom than the high-fidelity model.
We note that this represents still a useful resolution for this experiment while further considerable coarsening of the underlying mesh could lead to significant loss of accuracy or potentially even unphysical solutions.
The average evaluation cost of $f^{(1)}$ relative to the high-fidelity model is $w_1 = 0.0849$ on $16$ cores on one node on Lonestar6.
The variance and correlation coefficient, approximated using $20$ MC high- and low-fidelity model evaluations, are $\sigma_1^2 \approx 0.0256$ and $\rho_1 \approx 0.9990$, respectively.

Single-fidelity settings require fine discretizations for obtaining results with desired accuracy. 
Here, we want to show that when coarse-grid low-fidelity models are correlated with the high-fidelity model, their lower computational cost can be leveraged for speeding up multi-fidelity sampling methods. 
This is particularly relevant in plasma physics simulations in which usually only a limited number of high-fidelity simulations can be performed for training data-fit low-fidelity models.
Also, we want to show that our algorithm has a wide scope in the sense that a wide variety of low-fidelity models can be used.
 
\subsubsection{A sensitivity-driven dimension-adaptive sparse grid low-fidelity model}\label{sec:PlasmaSGModel}
The low-fidelity model $f^{(2)}_{n_2}$ is based on sensitivity-driven dimension-adaptive sparse grid interpolation \cite{Fa20, Farcas2020}, with code available\footnote{\url{https://github.com/ionutfarcas/sensitivity-driven-sparse-grid-approx}}.
Such sparse grid (SG) low-fidelity models have been successfully used for uncertainty quantification in plasma micro-instability simulations \cite{Fa20, Farcas2020, FDJ21}, including computationally expensive nonlinear simulations \cite{FMJ22}.
The adaptive procedure to construct the SG low-fidelity model is terminated when the underlying sparse grid reaches cardinality $n_2^*$, which corresponds to $n_2^*$ high-fidelity training samples, where $n_2^*$ is determined as discussed in Section~\ref{sec:camfmc}.
We note that even though the interpolation points used to construct the SG low-fidelity model have fine granularity (see \cite{Fa20, Farcas2020}), in cases where we cannot have exactly $n_2^*$ sparse-grid points, we take the closest, feasible number of sparse grid points to $n_2^*$. 

We use $20$ high- and low-fidelity model evaluations to asses the accuracy of the SG low-fidelity model; see also \cite{Ko22}.
To estimate the evaluation cost rate, we average the runtimes of the low-fidelity model evaluated at $1,000$ MC samples.
As can be seen in Figure~\ref{fig:AUG_SG_rates} and Table~\ref{tab:AUG_SG_rate_constants}, accuracy and runtime can be modeled well with an algebraic rate.
\begin{figure}[htbp]
\centering
\includegraphics[width=0.8\textwidth]{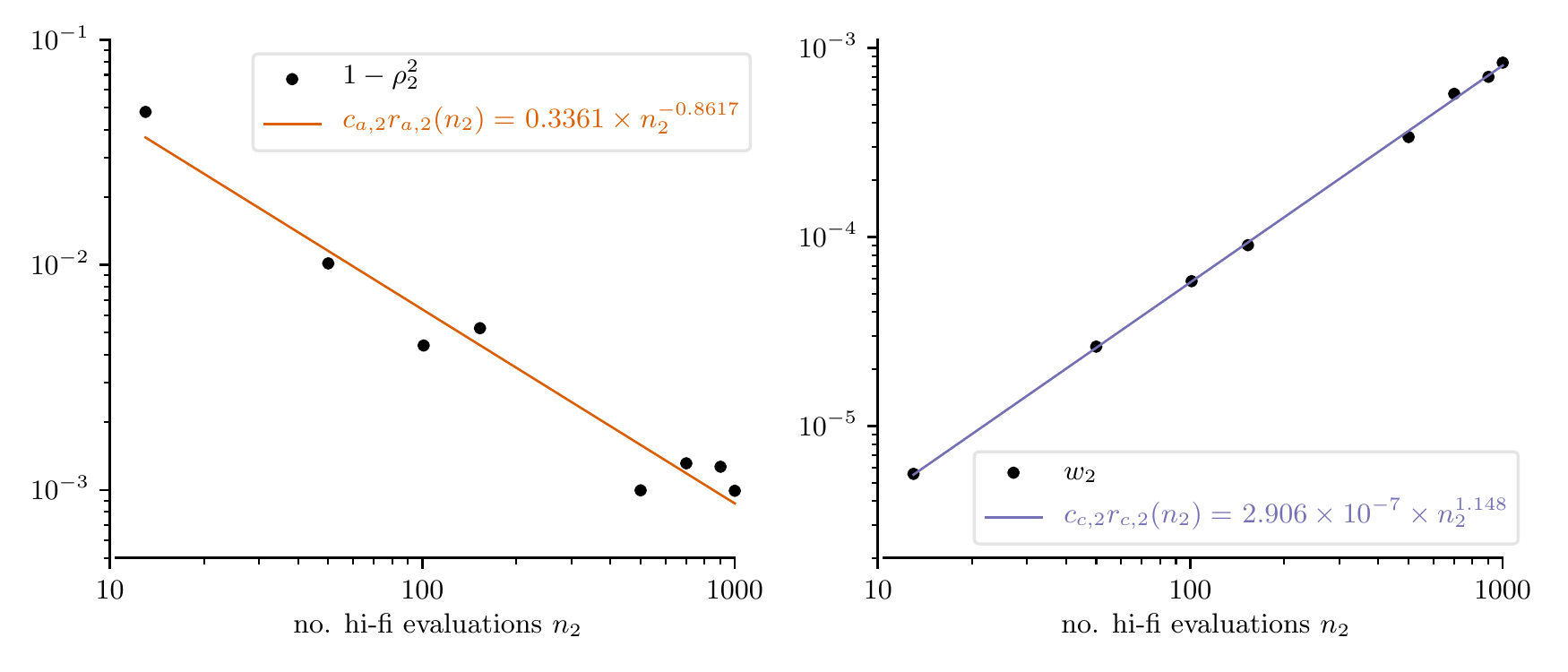}
\caption{ASDEX Upgrade Experiment: Estimated accuracy (left) and evaluation cost rates (right) for the SG low-fidelity model.}
\label{fig:AUG_SG_rates}
\end{figure}
\begin{table}[htbp]
\centering
\begin{tabular}{cc}
\begin{tabular}{|c|c|c|}
    \hline
    rate & rate type & estimated rate constants\bigstrut[t] \\
    \hline
    accuracy: $c_{a,2}r_{a,2}(n_2)$ & algebraic: $r_{a,2}(n_2) = n_2^{-\alpha_2}$ & $c_{a,2} = 0.3361$, $\alpha_2 = 0.8617$  \bigstrut[t]  \\
    \hline
    evaluation cost: $c_{c,2}r_{c,2}(n_2)$ & algebraic: $r_{c,2(n_2)} = n_2^{\beta_2}$ & $c_{c,2} = 2.9060 \times 10^{-7}$, $\beta_2 = 1.1480$ \bigstrut[t]\\
    \hline
\end{tabular} &
\end{tabular}
\caption{ASDEX Upgrade Experiment: Estimated accuracy and evaluation cost rates, and their constants for the SG low-fidelity model.}
\label{tab:AUG_SG_rate_constants}
\end{table}

\subsubsection{A deep learning low-fidelity model}

Low-fidelity model $f^{(3)}_{n_3}$ is a given by a fully connected, feed-forward three-layer deep neural network (DNN) with ReLU activation functions that is trained on high-fidelity data. 
The number of nodes of the hidden layers is $\lfloor 0.05 \times n_3 \rfloor$ and thus depends on the number of training samples $n_3$. 
Training is done over $20$ epochs using an Adam optimizer in which the loss function is the MSE.
The accuracy rate is estimated from $20$ high- and low-fidelity model evaluations and the cost rate from $1,000$ low-fidelity model evaluations; see Figure~\ref{fig:AUG_ML_rates} and Table~\ref{tab:AUG_ML_rate_constants}.
Notice that within the considered training size, the accuracy rate decreases rather slowly, at an algebraic rate of only about $0.2$.
This indicates that a large training size is required for obtaining accurate approximations should the DNN low-fidelity model be used to replace the high-fidelity model.
In addition, the evaluation cost rate increases very slowly, which is because the network has few layers only and the employed libraries are highly optimized. 
\begin{figure}[htbp]
\centering
\includegraphics[width=0.8\textwidth]{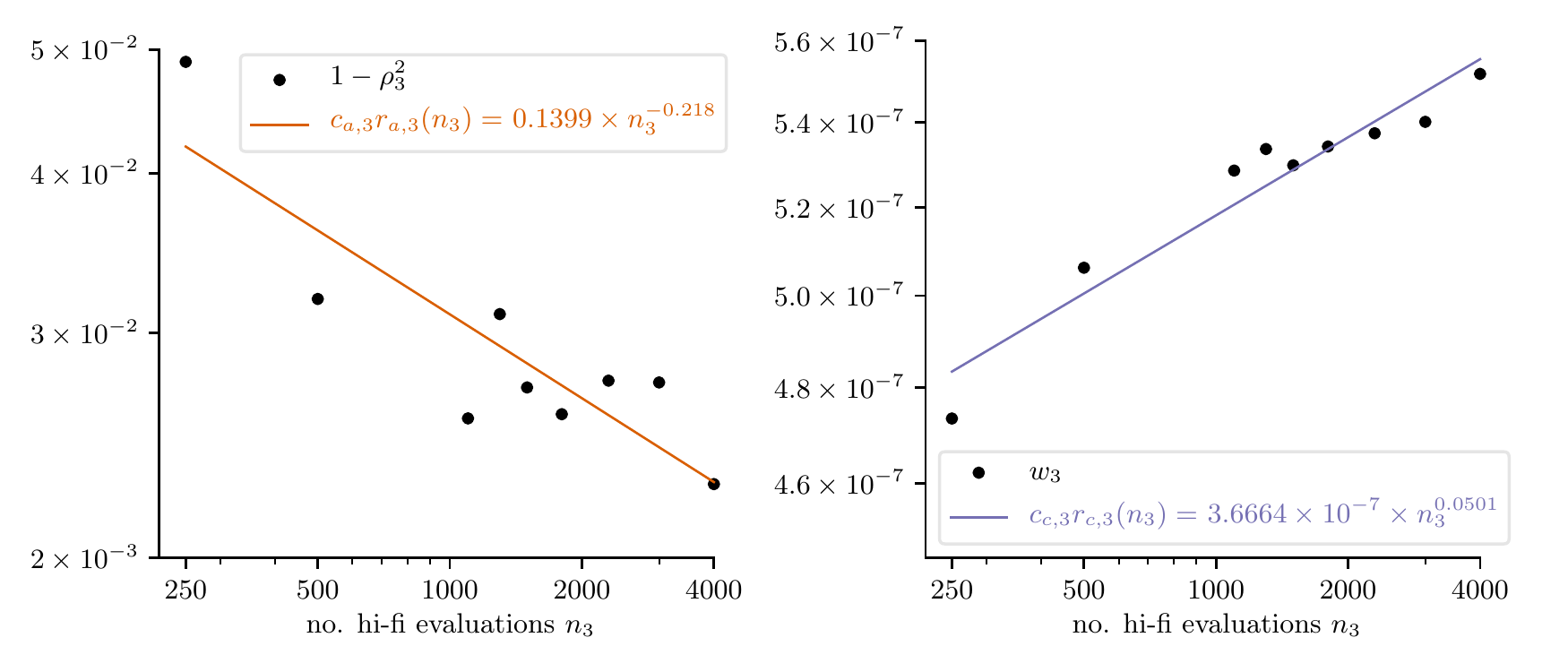}
\caption{ASDEX Upgrade Experiment: Estimated accuracy (left) and evaluation cost rates (right) for the DNN low-fidelity model.}
\label{fig:AUG_ML_rates}
\end{figure}
\begin{table}[htbp]
\centering
\begin{tabular}{cc}
\begin{tabular}{|c|c|c|}
    \hline
    rate & rate type & estimated rate constants\bigstrut[t] \\
    \hline
    accuracy: $c_{a,3}r_{a,3}(n_3)$ & algebraic: $r_{a,3}(n_3) = n_3^{-\alpha_3}$ & $c_{a,3} = 0.1399$, $\alpha_3 = 0.2180$  \bigstrut[t]  \\
    \hline
    evaluation cost: $c_{c,3}r_{c,3}(n_3)$ & algebraic: $r_{c,3}(n_3) = n_3^{\beta_3}$ & $c_{c,3} = 3.6664 \times 10^{-7}$, $\beta_3 = 0.0501$ \bigstrut[t]\\
    \hline
\end{tabular} &
\end{tabular}
\caption{ASDEX Upgrade Experiment: Estimated accuracy and evaluation cost rates, and their constants for the DNN low-fidelity model.}
\label{tab:AUG_ML_rate_constants}
\end{table}
    
\subsubsection{Context-aware multi-fidelity estimation}
We consider the computational budget $p = 5 \times 10^5$ seconds, which corresponds to $1,216$ high-fidelity model evaluations and is the limit of the computational resources available to us for this project.
To determine which combinations of models lead to CA-MFMC estimators with low MSEs for a budget $p$, we plot the analytic MSEs in Figure~\ref{fig:AUG_MSE_all}. 
The CA-MFMC estimator with the high-fidelity model $f^{(0)}$, the coarse-grid low-fidelity model $f^{(1)}$ and either the SG low-fidelity model $f^{(2)}_{n_2}$ or the DNN low-fidelity model $f^{(3)}_{n_3}$ lead to low MSEs. 
In contrast, the CA-MFMC estimator with the high-fidelity model $f^{(0)}$, SG model $f^{(2)}$, DNN model $f^{(3)}$ but without the coarse-grid model $f^{(1)}$ leads to a higher MSE. 
We therefore consider the CA-MFMC estimators with $f^{(0)}$, $f^{(1)}$ and either $f^{(2)}_{n_2}$ or $f^{(3)}_{n_3}$ (cf.~Section \ref{subsubsec:camfmc_multiple_models}).
This experiment shows that the analytic MSE can be used to select model combinations.
\begin{figure}[t]
\centering
\includegraphics[width=0.8\textwidth]{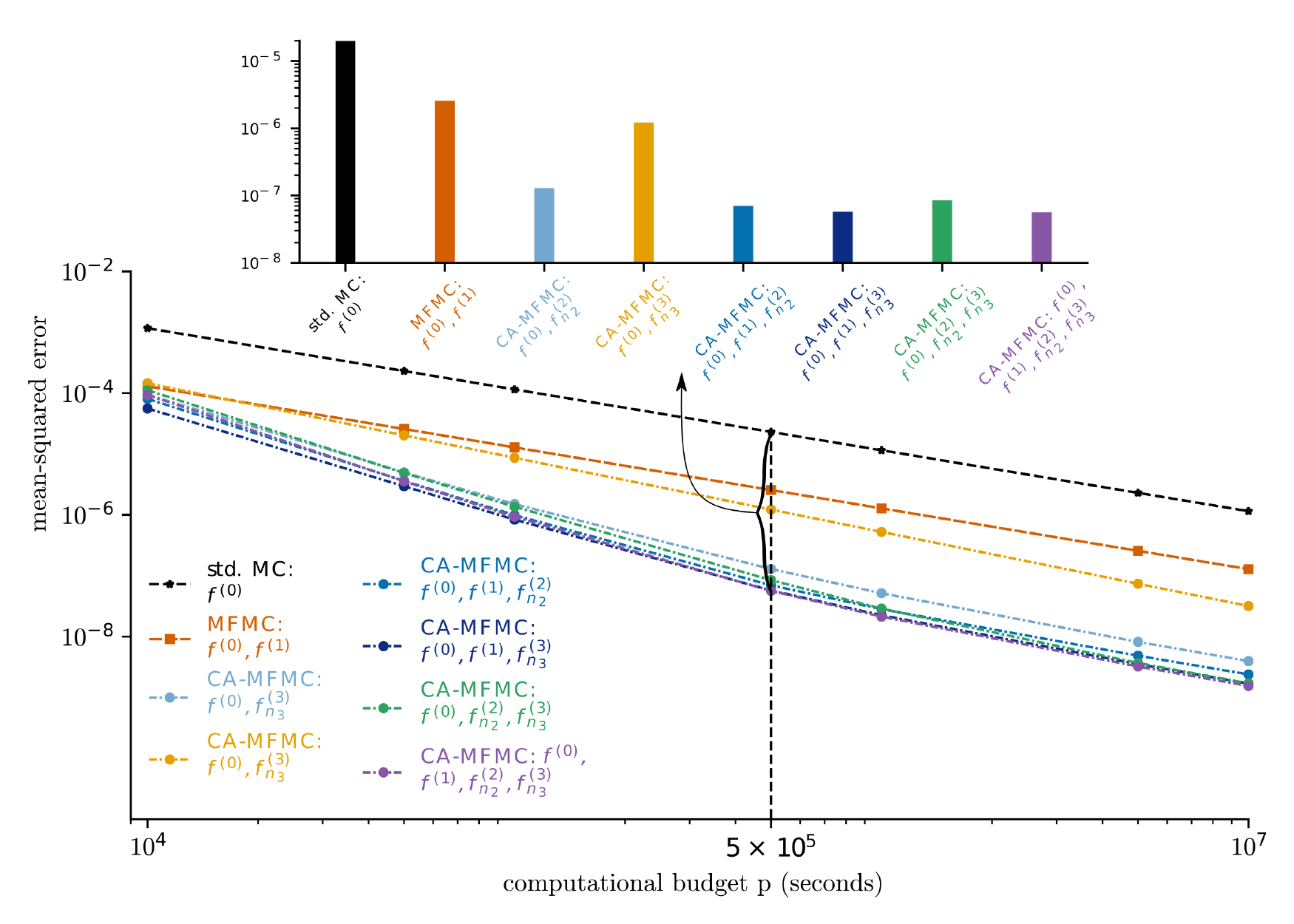}
\caption{ASDEX Upgrade Experiment: Combinations of models can be selected based on the analytical MSE, which can be computed without additional model evaluations as long as the accuracy and cost rates are available. In this example, the CA-MFMC estimator with the high-fidelity model $f^{(0)}$, the coarse-grid low-fidelity model $f^{(1)}$ and either the SG low-fidelity model $f^{(2)}$ or the DNN low-fidelity model $f^{(3)}$ have the highest (best) cost/error ratio, which indicates a good model selection.}
\label{fig:AUG_MSE_all}
\end{figure}

Based on the estimated constants in Table~\ref{tab:AUG_SG_rate_constants}, the accuracy and cost rates of the SG low-fidelity models are both strictly convex for any $n_2 \in [1, p  - 1]$ and any budget $p > 0$, and therefore Propositions~\ref{prop:MultiCAMFMC} and \ref{prop:BoundMulti} apply.
That is, the optimal, context-aware number of high-fidelity model evaluations for constructing the SG low-fidelity model, $n_2^*$, exists, is unique and bounded with respect to the budget. 
On the other hand, we see from Table~\ref{tab:AUG_ML_rate_constants} that the algebraic cost rate of the DNN low-fidelity model is concave since $\beta_3 < 1$, which means that we need to verify whether the assumptions in Propositions~\ref{prop:MultiCAMFMC} hold true.
To this end, we verify whether the function $h_3(n_3) = \kappa_{2} + \hat{c}_{a,3}r_{a,3}(n_3) + c_{c,3}r_{c,3}(n_3)$ defined in \eqref{eq:PK-1Kappa}, with $\kappa_{2} = w_1 = 0.0849$ and $\hat{c}_{a,3} = \kappa_1 c_{a, 3}$ with $\kappa_1 = 1 - \rho_1^2 = 0.0018$, satisfies condition \eqref{eq:ExtraStrictConvex}.
We do so for budget $p = 10^7$ seconds, i.e., the largest budget considered in Figure~\ref{fig:AUG_MSE_all}, for which $n_3 \in [1; 24,330]$.
The left plot in Figure~\ref{fig:AUG_u_h_2_ML} shows that $h_3^{\prime \prime}(n_3) > 0$ for all $n_2 \in [1, 24,330]$ which implies that \eqref{eq:ExtraStrictConvex} is satisfied for all budgets $0 < p \leq 10^7$, including the budget considered in our computations, i.e., $p = 5 \times 10^5$ seconds.
Therefore Propositions~\ref{prop:MultiCAMFMC} applies and the optimal, context-aware number of high-fidelity model evaluations for constructing the DNN low-fidelity model, $n_3^*$, exists and is unique. 
The right plot in Figure~\ref{fig:AUG_u_h_2_ML} depicts the objective $u_3$ defined in \eqref{eq:CAMFMC:ObjJ}, which has a unique global minimizer. 
\begin{figure}[htbp]
\centering
\includegraphics[width=1.0\textwidth]{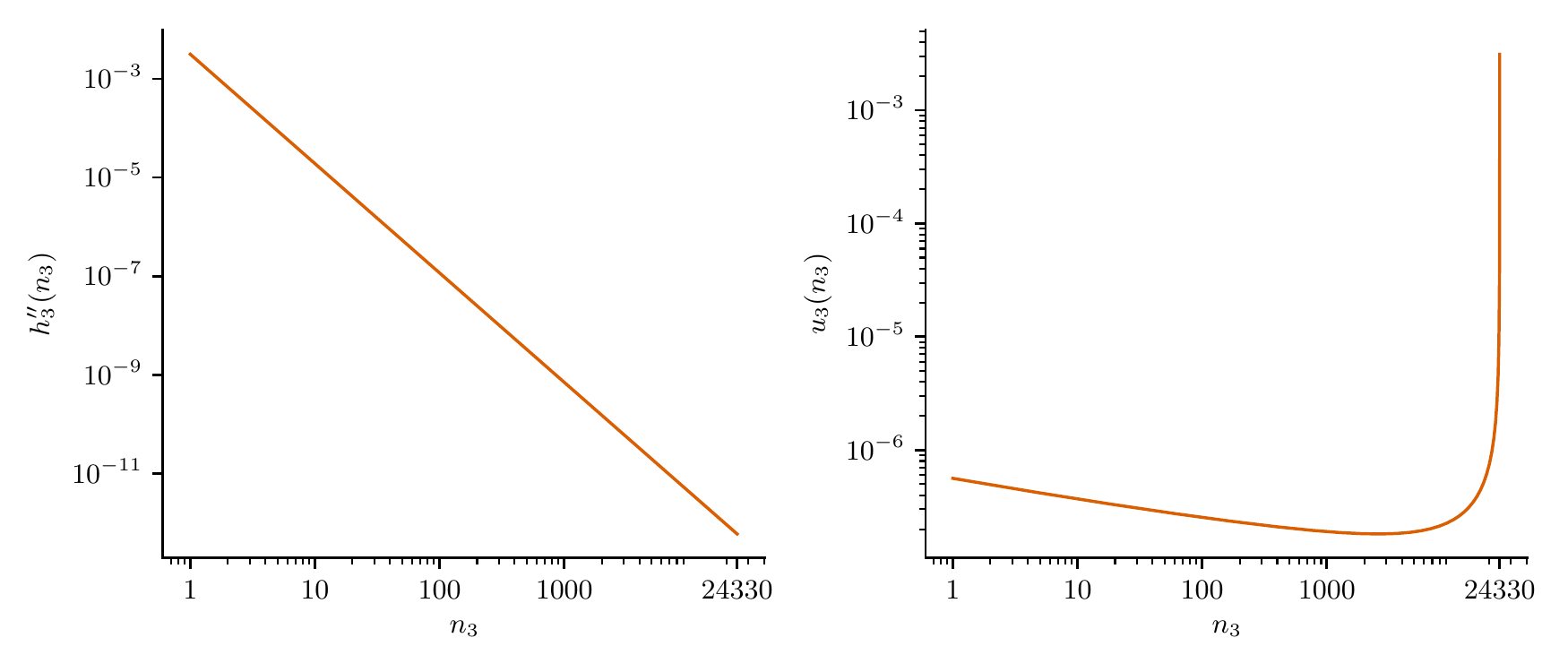}
\caption{ASDEX Upgrade Experiment:  On the left, we verify that $h''_3 > 0$ to
satisfy \eqref{eq:ExtraStrictConvex}) up to budget $p = 10^7$ seconds. 
On the right, we plot the objective $u_3$ defined in \eqref{eq:CAMFMC:ObjJ}, which has a unique global minimizer.} 
\label{fig:AUG_u_h_2_ML}
\end{figure}

In the left plot of Figure~\ref{fig:AUG_n_star}, we show the number of training samples $n_2^*$ for the CA-MFMC estimators with models $f^{(0)}, f_{n_2}^{(2)}$ and $f^{(0)}, f^{(1)}, f_{n_2}^{(2)}$. For the SG model $f_{n_2}^{(2)}$ we took $133$ training samples because the optimal $n_2^* = 131$ does not corresponding to a valid sparse grid; cf.~Section~\ref{sec:PlasmaSGModel}. 
In the right plot of Figure~\ref{fig:AUG_n_star}, the number of training samples $n_3^*$ for the CA-MFMC estimator with models $f^{(0)}, f^{(1)}, f_{n_3}^{(3)}$ is shown. Training with only $n_3^* = 159$ samples in case of our budget $p = 5 \times 10^{5}$ is sufficient to obtain a low-fidelity model that is useful for variance reduction in the CA-MFMC estimator together with the high-fidelity model, even though the low-fidelity model alone is insufficient to provide accurate predictions in single-fidelity settings. 
\begin{figure}[htbp]
\centering
\includegraphics[width=1.0\textwidth]{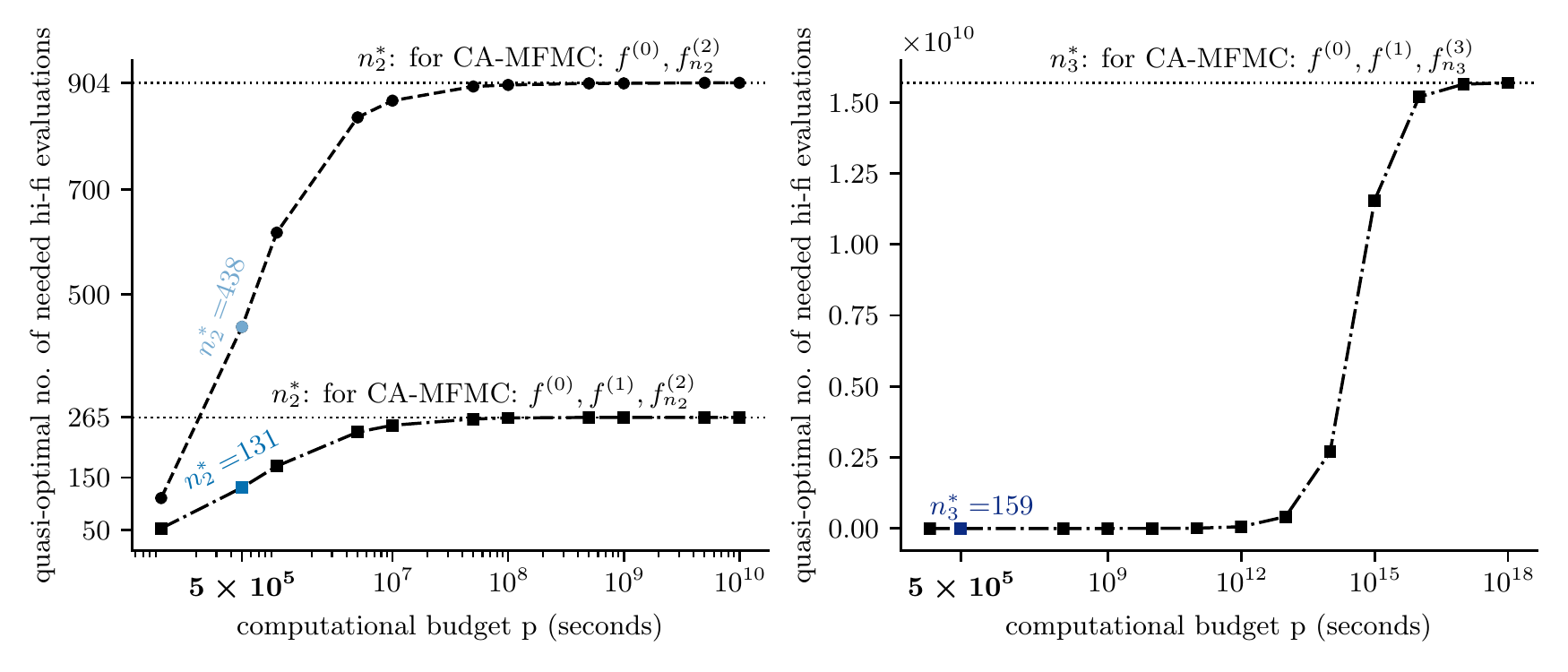}
\caption{ASDEX Upgrade Experiment: The number $n_2^*$ and $n_3^*$ of high-fidelity training samples select by the proposed context-aware approach for obtaining a multi-fidelity estimator with highest (best) cost/error ratio. For example, depending on whether the coarse-grid-based low-fidelity model $f^{(1)}$ is used together with the high-fidelity model $f^{(0)}$ and the SG-based model $f^{(2)}_{n_2}$, the proposed approach selects either $n_2^* = 131$ or $n_2^* = 438$ training samples, which demonstrates the context-aware trade-off that is made by the proposed approach when training low-fidelity models.}
\label{fig:AUG_n_star}
\end{figure}

Consider now the left plot of Figure~\ref{fig:AUG_MSEDist} that compares the MSEs of the CA-MFMC estimators to static MFMC and regular MC. 
The reference solution was obtained via the MFMC estimator depending on the coarse-grid and DNN low-fidelity models for a budget of $p = 3 \times 10^6$ seconds using $N = 5$ replicates; here we needed $n_3^* = 847$ high-fidelity evaluations to construct the DNN low-fidelity model. 
The average reference value of the expected growth rate of the dominant electron temperature gradient-driven micro-instability mode is $\hat{\mu}_{\mathrm{ref}} = 0.3967$.
The context-aware estimators are up to two orders of magnitude more accurate than the standard MC estimator. The roughly two orders of magnitude speedup of the CA-MFMC estimators translate into a runtime reduction from $72$ days to roughly $4$ hours on $32$ cores on one node of the Lonestar6 supercomputer. We also observe that MFMC with the high-fidelity and coarse-grid model is about one order of magnitude more accurate in terms of the MSE than the standard MC estimator, showing that coarse-grid plasma physics models can be useful in multi-fidelity settings. The trend that we observe here numerically are in alignment with the analytic MSEs shows in Figure~\ref{fig:AUG_MSE_all}. 

The right plot of Figure~\ref{fig:AUG_MSEDist} shows how the number of samples are distributed among the models. All multi-fidelity estimators allocate more than $98 \%$ of the total number of samples to low-fidelity models, which explains the speedups.
However, there is still a small percentage corresponding to the high-fidelity model, which is desirable in practical applications to achieve unbiasedness of the estimators. 
\begin{figure}[htbp]
\centering
\includegraphics[width=1.0\textwidth]{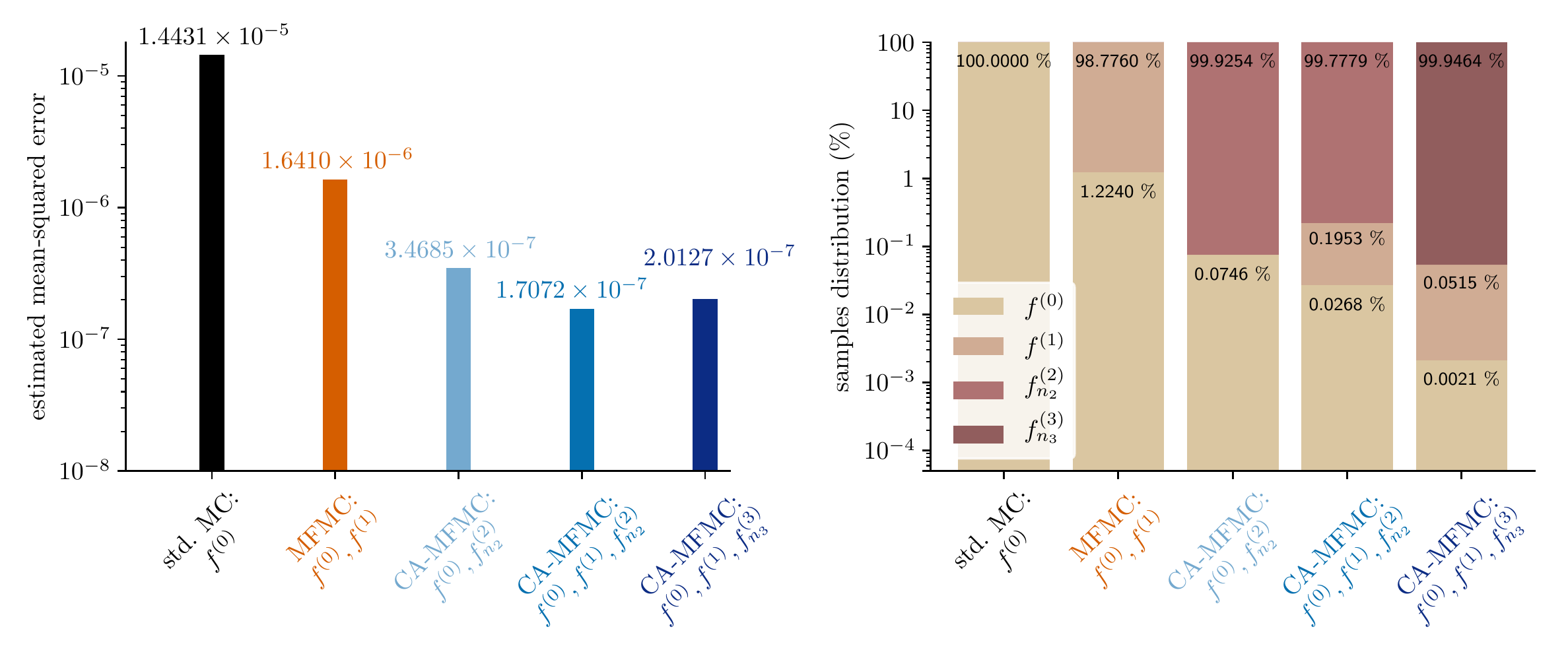}
\caption{ASDEX Upgrade Experiment: The left plots show that the CA-MFMC estimators achieve about two orders of magnitude speedup, which translates into a runtime reduction from $72$ days to roughly $4$ hours on one node of the Lonestar6 supercomputer. The right plot shows the sample distribution across models in the employed estimators.}
\label{fig:AUG_MSEDist}
\end{figure}

\section{Conclusions} \label{sec:conclusion}
The proposed context-aware learning approach for variance reduction trades off training hierarchies of low-fidelity models with Monte Carlo estimation.
By leveraging the trade-off between either adding a high-fidelity model sample to the training set for constructing low-fidelity models versus using the high-fidelity model sample in the Monte Carlo estimator, our analysis shows that the quasi-optimal number of training samples from the high-fidelity model is bounded independent of the computational budget and thus that after a finite number of samples, all high-fidelity model samples should be used in the Monte Carlo estimator rather than being used to improve the low-fidelity model. 
This means that low-fidelity models can be over-trained for multi-fidelity uncertainty quantification. 
We have demonstrated the over-training numerically in a thermal block scenario, in which a reduced basis low-fidelity model constructed from $50$ high-fidelity training samples led to a multi-fidelity estimator with a poorer cost/error ratio than the proposed context-aware estimator that determined $18$ to be the maximum quasi-optimal number of training samples. 
This has clear implications on training data-fit and machine-learning-based models too, which can require large data sets for achieving high prediction accuracy but are sufficient to be trained with few data points when they are merely used for variance reduction together with the high-fidelity model in multi-fidelity uncertainty quantification, as we have shown in a numerical experiment with plasma micro-turbulence simulations.

\section*{Acknowledgements}
I.-G.F.~and F.J.~were partially supported by the Exascale Computing Project (No.~17-SC-20-SC), a collaborative effort of the U.S.~Department of Energy Office of Science and the National Nuclear Security Administration.
B.P.~acknowledges support from the Air Force Office of Scientific Research (AFOSR) award FA9550-21-1-0222 (Dr.~Fariba Fahroo).
We thank Tobias Goerler for useful discussions and insights about the considered plasma micro-turbulence simulation scenario.
We also gratefully acknowledge the compute and data resources provided by the Texas Advanced Computing Center at The University of Texas at Austin (\texttt{https://www.tacc.utexas.edu/}).

\bibliography{ca_mfmc}
\bibliographystyle{abbrv}

\end{document}